\providecommand{\U}[1]{\protect\rule{.1in}{.1in}}
{\setlength\paperheight {305mm}
\setlength\paperwidth  {210mm}}
\providecommand{\U}[1]{\protect\rule{.1in}{.1in}}
\newtheorem{theo}{Theorem}
\newtheorem{lem}[theo]{Lemma}
\newtheorem{prop}[theo]{Proposition}
\newtheorem{cor}[theo]{Corollary}
\newtheorem{rem}{Remark}
\newenvironment{proof}[1][Proof]{\noindent \textbf{#1.} }{\ \rule{0.5em}{0.5em}}
\newenvironment{dem}[1][Proof]{\noindent \textbf{#1.} }{\ \rule{0.5em}{0.5em}}
\let\pdfoutput=\undefined\fi
\begin{document}

\title{Convex regularization and subdifferential calculus\thanks{%
The first author was partially supported by Centro de Modelamiento Matem\'{a}%
tico (CMM), ACE210010 and FB210005, BASAL funds for center of excellence and
ANID-Chile grant Fondecyt Regular 1240335. The research of the second and
third author is supported by Grant PID2022-136399NB-C21 funded by
MICIU/AEI/10.13039/501100011033 and by ERDF/EU. The research of the third
author is also supported by MICIU of Spain and Universidad de Alicante
(Contract Beatriz Galindo BEA- GAL 18/00205),\ AICO/2021/165 of Generalitat
Valenciana.}}
\author{Rafael Correa\thanks{%
e-mail: rcorrea@dim.uchile.cl} \\
{\small Universidad de Chile, Chile} \and Abderrahim Hantoute\thanks{%
e-mail: hantoute@ua.es} \\
{\small Universidad de Alicante, Spain} \and Marco A. L\'{o}pez\thanks{%
e-mail: marco.antonio@ua.es} \\
{\small Universidad de Alicante, Spain}}
\date{}
\maketitle

\begin{abstract}
This paper deals with the regularization of the sum of functions defined on
a locally convex space through their closed-convex hulls in the bidual
space. Different conditions guaranteeing that the closed-convex hull of the
sum is the sum of the corresponding closed-convex hulls are provided. These
conditions are expressed in terms of some $\varepsilon $-subdifferential
calculus rules for the sum. The case of convex functions is also studied,
and exact calculus rules are given under additional
continuity/qualifications conditions. As an illustration, a variant of the
proof of the classical Rockafellar theorem on convex integration is proposed.

\textbf{Key words. }Biconjugate function, bidual space, regularization of
the sum, $\varepsilon $-subdifferential calculus.

\emph{Mathematics Subject Classification (2020)}: 49J45, 49J52, 49J53, 49N15
\end{abstract}

\section{Introduction}

Convexification processes are crucial in optimization. For instance, if we
consider the optimization problem%
\begin{equation*}
(\mathtt{P}_{f})\quad \limfunc{Min}f(x)\text{\quad }\mathrm{s.t.}\text{\quad 
}x\in F,
\end{equation*}%
with objective function $f:X\rightarrow \mathbb{R\cup \{+\infty \}}$, $X$
being a (real) locally convex space, and feasible set $F$, we can think on
the convenience of replacing this problem by an associated\ \emph{convex
relaxed problem}%
\begin{equation*}
(\mathtt{P}_{g})\quad \limfunc{Min}g(x)\text{\quad }\mathrm{s.t.}\text{\quad 
}x\in G,
\end{equation*}%
where $g:\mathbb{R\cup \{+\infty \}}$ is a proper convex function and $G$ is
a convex set that somehow approximate\ $f$ and $F,$ respectively. For
instance, $g$ can be the convex or closed convex hull of $f,$ and $G$ is the
convex or closed convex hull of $F$. This strategy makes sense, both
theoretically and numerically, and one is interested in conditions under
which the optimal values of both problems, $v(\mathtt{P}_{f})$ and $v(%
\mathtt{P}_{g}),$ coincide. More precisely, it is a well-known fact (e.g. 
\cite[Lemma 8.3.1]{CHL23}) that 
\begin{equation*}
v(\mathtt{P}_{f})=\limfunc{Min}_{X}(f+\mathrm{I}_{F})=\limfunc{Min}_{X}%
\overline{\limfunc{co}}(f+\mathrm{I}_{F}),
\end{equation*}%
and the desired conditions should guarantee the equality 
\begin{equation}
\overline{\limfunc{co}}(f+\mathrm{I}_{F})=(\overline{\limfunc{co}}f)+\mathrm{%
I}_{\overline{\limfunc{co}}F},  \label{ee}
\end{equation}%
which, generally, can be expressed using the biconjugate function (the
definition is given latter): 
\begin{equation*}
(f+\mathrm{I}_{F})^{\ast \ast }=f^{\ast \ast }+(\mathrm{I}_{F})^{\ast \ast }.
\end{equation*}%
Doing so, it is also interesting\ to consider this relaxation in the
possibly larger bidual space $X^{\ast \ast }.$ This problem is also
meaningful even when $f$ and $F$ are convex-lsc and closed convex,
respectively. This interest is\ justified\ by the fact that the original
problem may not have solutions, but the relaxed one can\ be solved in $%
X^{\ast \ast }$ where the bounded sets are relatively weak*-compact (thanks
to the Alaoglu-Bourbaki theorem).$\ $This process gives rise to the
so-called generalized solutions; see, for instance, \cite{EkTe76}, which
includes important examples where the closed convex hulls of $f$ and $F$
coincide with the their closed hulls, $\bar{f}$ and $\overline{F}$. As some
authors (e.g., \cite{CGH12}, \cite{FS00}, \cite{Za08}) recognize in their
works the interest of using\ the biconjugate of functions which are\textbf{\ 
}obtained by several operations which preserve convexity. The authors of 
\cite{BW08} (and references therein) also work in the latter framework, and
establish formulae for the biconjugates of some functions that appear often
in convex optimization, under suitable regularity conditions.

This article is part of the methodology described above for regularizing the
sum of functions defined in locally convex spaces. To this aim we provide\
different criteria to ensure the equalities 
\begin{equation}
\overline{\limfunc{co}}(f+g)=(\overline{\limfunc{co}}f)+(\overline{\limfunc{%
co}}g)\text{ and }(f+g)^{\ast \ast }=f^{\ast \ast }+g^{\ast \ast },
\label{eee}
\end{equation}%
by means of\ $\varepsilon $-subdifferential calculus rules for\ the sum.

In the Banach spaces\ setting, other criteria expressed in terms of exact
and fuzzy subdifferential calculus are also provided. In particular, we
extend some sequential subdifferential calculus rules for the sum of convex
functions introduced in \cite{T1, T2} for\ reflexive Banach spaces.

The structure of the paper is as follows. After a section devoted to
notation and preliminaries, Theorem \ref{prop1} in section \ref{Reg}
establishes different conditions for the fulfillment of (\ref{eee}). In
section \ref{enh} conditions are given to ensure the validity of (\ref%
{equality}) when both functions are convex but using, this time, exact
calculus rule as in \cite[Theorem 2.8.7]{Za02} and \cite{JeBu05}. Namely,
Theorem \ref{thmseq} provides\ some sequential subdifferential calculus
rules for the sum of convex functions in Banach spaces which are not
necessarily reflexive. The last section \ref{Appl} provides an illustration
of the main results of this work.

\section{Notations and preliminary results}

Let $X$ be a (real) locally convex space (lcs, in brief) and let $X^{\ast }$
be its\ topological dual space. We consider\ the bidual of $X,$ written $%
X^{\ast \ast },$ which is\ the topological dual of $X^{\ast }$ when the
latter is endowed with any locally convex topology $\tau $ such that 
\begin{equation}
\sigma (X^{\ast },X)\subset \tau \subset \beta (X^{\ast },X),  \label{tau}
\end{equation}%
where $\beta (X^{\ast },X)$ is the strong topology (for instance, the dual
norm topology in $X^{\ast }$ when $X$ is a normed space), and $\sigma
(X^{\ast },X)$ is the $w^{\ast }$-topology. Since\ $\sigma (X^{\ast
},X)\subset \tau $ (possibly, strict), we have $X=(X^{\ast },\sigma (X^{\ast
},X))^{\ast }\subset (X^{\ast },\tau )^{\ast }=X^{\ast \ast }$, and we can
identify $X$ with\ a linear subspace of $X^{\ast \ast }.$ Moreover, we
consider in $X^{\ast \ast }$ the topology $\sigma (X^{\ast \ast },X^{\ast
}), $ also denoted $w^{\ast \ast }$ for which $X$ is dense in $X^{\ast \ast
}.$ The topology $\sigma (X^{\ast \ast },X^{\ast })$ induces in $X$ the weak
topology $\sigma (X,X^{\ast }).$ In particular, if $\tau =\sigma (X^{\ast
},X),$ then $X^{\ast \ast }=X$ and $w^{\ast \ast }=\sigma (X,X^{\ast }).$ If 
$X$ is a normed space, we denote by $B_{r}(x)$ the closed ball centered at $%
x\in X$ with radius $r>0,$ and\ $B_{X}:=B_{1}(\theta ).$

Given any locally convex topology $\mathfrak{T}$ on $X$ (or $X^{\ast }$, $%
X^{\ast \ast },$ $X$), $\mathcal{N}_{X}(\mathfrak{T})$ denotes the\ basis of
neighborhoods of the origin $\theta $ that are convex, $\mathfrak{T}$-closed
and\ balanced. Given a set $A\subset X$ (or $X^{\ast },$ $X^{\ast \ast }),$
by $\limfunc{co}A$ and $\overline{\limfunc{co}}^{\mathfrak{T}}A$ we denote
the convex and the closed-convex hulls of $A$, respectively, whereas $%
\limfunc{cl}^{\mathfrak{T}}A$ and $\limfunc{int}^{\mathfrak{T}}A$ are the
respective closure and interior of $A$ (we omit the superscript $\mathfrak{T}
$ when no confusion is possible). The indicator of the set $A$ is the
function $\mathrm{I}_{A},$ which is $0$ on $A$ and $+\infty $ elsewhere.

We denote $\overline{\mathbb{R}}:=\mathbb{R\cup }\{\pm \infty \}$ and $%
\mathbb{R}_{\infty }:=\mathbb{R\cup }\{+\infty \},$ and use the convention $%
+\infty +(-\infty )=+\infty $ and $0(+\infty )=+\infty .$ Given a function $%
f:X\rightarrow \overline{\mathbb{R}},$ we define its (effective) domain $%
\limfunc{dom}f:=\{x\in X:f(x)<+\infty \}$ and epigraph $\limfunc{epi}%
f:=\{(x,\lambda )\in X\times \mathbb{R}:f(x)\leq \lambda \}.$ We say that $f$
is proper if $f>-\infty $ and $\limfunc{dom}f\neq \emptyset ,$
lower-semicontinuous (lsc, in brief) if $\limfunc{epi}f$ is closed, and
convex if $\limfunc{epi}f$ is convex.$\ $The closed and the closed convex
hulls of $f$ are the functions $\limfunc{cl}^{\mathfrak{T}}f$ and $\overline{%
\limfunc{co}}^{\mathfrak{T}}f$ whose epigraphs are $\limfunc{cl}^{\mathfrak{T%
}}(\limfunc{epi}f)$ and $\overline{\limfunc{co}}^{\mathfrak{T}}(\limfunc{epi}%
f),$ respectively.\ We denote by $\Gamma _{0}(X)$ the family of proper lsc
convex functions. The conjugate and biconjugate functions of $f$ are,
respectively, the functions $f^{\ast }:X^{\ast }\rightarrow \overline{%
\mathbb{R}}$ and $f^{\ast \ast }:X^{\ast \ast }\rightarrow \overline{\mathbb{%
R}}$ defined by 
\begin{equation*}
f^{\ast }(x^{\ast }):=\sup_{x\in X}\{\left\langle x^{\ast },x\right\rangle
-f(x)\},\text{ }x^{\ast }\in X^{\ast },
\end{equation*}%
and 
\begin{equation*}
f^{\ast \ast }(x^{\ast \ast }):=\sup_{x^{\ast }\in X^{\ast }}\{\left\langle
x^{\ast \ast },x^{\ast }\right\rangle -f^{\ast }(x^{\ast })\},\text{ }%
x^{\ast \ast }\in X^{\ast \ast }.
\end{equation*}%
Given\ $\varepsilon \in \mathbb{R}$, the $\varepsilon $-subdifferential of $%
f $ at $x\in X$ is the set\ 
\begin{equation*}
\partial _{\varepsilon }f(x):=\{x^{\ast }\in X^{\ast }:f^{\ast }(x^{\ast
})+f(x)\leq \left\langle x^{\ast },x\right\rangle +\varepsilon \},
\end{equation*}%
with the convention that $\partial _{\varepsilon }f(x)=\emptyset $ if $%
f(x)\notin \mathbb{R}$ or $\varepsilon <0.$ If $\varepsilon =0$, we recover
the usual (Fenchel) subdifferential of $f$, $\partial f:=\partial _{0}f.$ We
also use the notation $\mathrm{N}_{A}(x):=\partial \mathrm{I}_{A}(x)$ for
the normal cone to a set $A\subset X$ at $x\in X.$

The inf-convolution of\ two functions $f,$ $g:X\rightarrow \overline{\mathbb{%
R}}\ $is the function $f\square g:X\rightarrow \overline{\mathbb{R}}$
defined as 
\begin{equation*}
(f\square g)(x):=\inf \{f(x_{1})+g(x_{2}):x_{1}+x_{2}=x\}.
\end{equation*}%
We always have that\ 
\begin{equation*}
\left( f\square g\right) ^{\ast }=f^{\ast }+g^{\ast }.
\end{equation*}%
Additionally, if $f,$ $g\in \Gamma _{0}(X)$ are such that $\limfunc{dom}%
f\cap \limfunc{dom}g\neq \emptyset ,$ then 
\begin{equation}
(f+g)^{\ast }=\overline{f^{\ast }\square g^{\ast }}^{\sigma (X^{\ast },X)}.
\label{clinf}
\end{equation}%
This fact comes from the Fenchel-Moreau-Rockafellar theorem which states
that, for every function $f:X\rightarrow \mathbb{R}_{\infty },$ we have 
\begin{equation*}
(f^{\ast \ast })_{\mid X}=\overline{\limfunc{co}}f,
\end{equation*}%
provided that $\overline{\limfunc{co}}f$ is proper and where $(f^{\ast \ast
})_{\mid X}$ is\ the restriction of $f^{\ast \ast }$ to $X.$

For any\ real extended-valued function $f:X\rightarrow \overline{\mathbb{R}}%
, $ we define\ its extension to $X^{\ast \ast }$ by 
\begin{equation*}
\hat{f}(x):=\left\{ 
\begin{array}{ll}
f(x), & \text{if }x\in X, \\ 
+\infty , & \text{if }x\in X^{\ast \ast }\setminus X.%
\end{array}%
\right.
\end{equation*}%
For simplicity, the extension $\hat{f}$ is usually denoted as the original
function $f$ when no confusion occurs. It is easy to see that $(\hat{f}%
)^{\ast }=f^{\ast }$ and, for every $x\in X$ and $\varepsilon \in \mathbb{R}$%
, 
\begin{equation*}
\partial _{\varepsilon }\hat{f}(x)=\partial _{\varepsilon }f(x).
\end{equation*}%
By $\overline{f}^{w^{\ast \ast }}:X^{\ast \ast }\rightarrow \overline{%
\mathbb{R}}$ we denote the $w^{\ast \ast }$-closed hull of (the extension
of) $f:X\rightarrow \overline{\mathbb{R}};$ equivalently, 
\begin{equation}
\overline{f}^{w^{\ast \ast }}(x)=\liminf_{y\overset{w^{\ast \ast }}{%
\rightarrow }x,\text{ }y\in X}f(y):=\sup_{U\in \mathcal{N}}\inf_{y\in
x+U}f(y),\text{ }x\in X^{\ast \ast },  \label{marco1}
\end{equation}%
where $\mathcal{N}:=\mathcal{N}_{X^{\ast \ast }}(w^{\ast \ast })$ and $y%
\overset{w^{\ast \ast }}{\rightarrow }x$ (usually also written $%
y\rightharpoonup x$) refers to the convergence in $(X^{\ast \ast },w^{\ast
\ast }).$ Moreover,\ the $w^{\ast \ast }$-closed convex hull of (the
extension of) $f$ is the function $\overline{\limfunc{co}}^{w^{\ast \ast
}}f:=\overline{\limfunc{co}}^{w^{\ast \ast }}(\hat{f}).$ The functions $%
\overline{\limfunc{co}}^{w^{\ast \ast }}f$ and $\overline{f}^{w^{\ast \ast
}}\ $are the supremum of all the lsc convex and all the lsc minorants of $f,$
respectively.\ It is also worth observing that\ 
\begin{equation*}
(\overline{\limfunc{co}}^{w^{\ast \ast }}f)^{\ast }=(\overline{f}^{w^{\ast
\ast }})^{\ast }=f^{\ast }.
\end{equation*}%
When $f\in \Gamma _{0}(X)$ and $\tau =\sigma (X^{\ast },X),$ we have $%
\overline{\limfunc{co}}^{w^{\ast \ast }}f=\overline{f}^{w^{\ast \ast }}=%
\overline{f}^{\sigma (X,X^{\ast })}=f,$ due to Mazur's theorem. If\ $f$ is
proper,$\ $the function $\overline{\limfunc{co}}^{w^{\ast \ast }}f$ is
proper if and only if $f$ has a continuous\ affine minorant. Indeed, $%
\overline{\limfunc{co}}^{w^{\ast \ast }}f$ is proper if and only if it has a
continuous\ affine minorant (see, e.g., \cite[Proposition 3.1.4]{CHL23}), so
if and only if $f$ also has.

\section{Regularization of the sum\label{Reg}}

In this section, given functions $f$ and $g$ defined on the lcs $X,$ we
establish different conditions to guarantee the equality\ 
\begin{equation}
\overline{\limfunc{co}}^{w^{\ast \ast }}(f+g)=(\overline{\limfunc{co}}%
^{w^{\ast \ast }}f)+(\overline{\limfunc{co}}^{w^{\ast \ast }}g).
\label{equality}
\end{equation}%
These conditions rely on the validity of\ appropriate $\varepsilon $%
-subdifferential calculus rules for\ the sum $f+g.$ A first condition\ has
been introduced in \cite[Proposition 5.5]{CGH12}, and uses the following
calculus rule, for $x\in \limfunc{dom}f\cap \limfunc{dom}g$ and $\varepsilon
>0,$ 
\begin{equation}
\partial _{\varepsilon }(f+g)(x)=\tbigcap_{\alpha >\varepsilon }\limfunc{cl}%
\nolimits^{\tau }\left( \tbigcup_{\substack{ \varepsilon _{1}+\varepsilon
_{2}=\alpha  \\ \varepsilon _{1},\varepsilon _{2}\geq 0}}\partial
_{\varepsilon _{1}}f(x)+\partial _{\varepsilon _{2}}g(x)\right) .
\label{sum1b}
\end{equation}%
Observe that the set between parenthesis is always convex and, so, instead
of $\tau $ one can use any other compatible topology for the pair $(X^{\ast
},X^{\ast \ast }),$ in particular the topology $\sigma (X^{\ast },X^{\ast
\ast })$.

In the present work, we provide an alternative condition by means of the
sharper formula,%
\begin{equation}
\partial _{\varepsilon }(f+g)(x)=\limfunc{cl}\nolimits^{\tau }\left(
\tbigcup _{\substack{ \varepsilon _{1}+\varepsilon _{2}=\varepsilon  \\ %
\varepsilon _{1},\varepsilon _{2}\geq 0}}\partial _{\varepsilon
_{1}}f(x)+\partial _{\varepsilon _{2}}g(x)\right) .  \label{summ1}
\end{equation}%
Notice that\ (\ref{summ1}) reduces\ to the well-known Hiriart-Urruty \&
Phelps formula in the framework of convex analysis (\cite{HMSV95}; see,
also, \cite{CHL23}, \cite{CHL16}, and \cite{HLZ08}), provided that\ $f,$ $%
g\in \Gamma _{0}(X)$ and $\tau =\sigma (X^{\ast },X)$ (in this case $X^{\ast
\ast }=X$). Our characterizations will also involve the following inclusion,
for $x\in X$ and $\varepsilon >0,$ 
\begin{equation}
\partial _{\varepsilon }(f+g)(x)\subset \limfunc{cl}\nolimits^{\tau
}(\partial _{2\varepsilon }f(x)+\partial _{2\varepsilon }g(x)).
\label{sum1d}
\end{equation}

We first state a useful equivalence of (\ref{equality}) with a conjugation
sum rule. This result, which is used later in the proof of\ Theorem \ref%
{prop1}, is a nonconvex counterpart\ of the corresponding property in \cite[%
Theorem 2.5]{BG08}.

\begin{prop}
\label{lema1} Let $f,$ $g:X\rightarrow \mathbb{R}_{\infty }$ be functions
such that $\limfunc{dom}f\cap \limfunc{dom}g\neq \emptyset $ and each one
has\ a continuous\ affine minorant. Then (\ref{equality}) is equivalent to 
\begin{equation}
(f+g)^{\ast }=\overline{f^{\ast }\square g^{\ast }}^{\tau }.  \label{no}
\end{equation}
\end{prop}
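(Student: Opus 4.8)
The plan is to apply Fenchel conjugation $(\cdot)^{\ast }$ to both members of (\ref{equality}) and to exploit the fact that conjugation is injective on $\Gamma _{0}(X^{\ast \ast })$, where $X^{\ast \ast }$ is regarded as the lcs $(X^{\ast \ast },w^{\ast \ast })$ whose topological dual is $X^{\ast }$. First I would check that both sides of (\ref{equality}) lie in $\Gamma _{0}(X^{\ast \ast })$. Adding continuous affine minorants of $f$ and $g$ produces a continuous affine minorant of $f+g$, while $\limfunc{dom}(f+g)=\limfunc{dom}f\cap \limfunc{dom}g\neq \emptyset $; hence the properness criterion recalled in the preliminary section guarantees that $\overline{\limfunc{co}}^{w^{\ast \ast }}(f+g)$ is proper, and it is $w^{\ast \ast }$-lsc convex by construction. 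Likewise each of $\overline{\limfunc{co}}^{w^{\ast \ast }}f$ and $\overline{\limfunc{co}}^{w^{\ast \ast }}g$ is proper and $w^{\ast \ast }$-lsc convex, and since both are finite at any common point of $\limfunc{dom}f\cap \limfunc{dom}g$, their sum again belongs to $\Gamma _{0}(X^{\ast \ast })$.

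Writing $\Phi :=\overline{\limfunc{co}}^{w^{\ast \ast }}(f+g)$ and $\Psi :=\overline{\limfunc{co}}^{w^{\ast \ast }}f+\overline{\limfunc{co}}^{w^{\ast \ast }}g$, the Fenchel--Moreau--Rockafellar theorem applied to the dual pair $\langle X^{\ast \ast },X^{\ast }\rangle $ yields $\Phi =\Phi ^{\ast \ast }$ and $\Psi =\Psi ^{\ast \ast }$, so that $\Phi =\Psi $ if and only if $\Phi ^{\ast }=\Psi ^{\ast }$. Thus (\ref{equality}) is equivalent to the equality $\Phi ^{\ast }=\Psi ^{\ast }$ of functions on $X^{\ast }$, and it only remains to identify these two conjugates.

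For the first one I would invoke the identity $(\overline{\limfunc{co}}^{w^{\ast \ast }}h)^{\ast }=h^{\ast }$ from the preliminaries, applied to $h=f+g$, which gives $\Phi ^{\ast }=(f+g)^{\ast }$. For the second I would apply (\ref{clinf}) in the lcs $(X^{\ast \ast },w^{\ast \ast })$, whose dual is $X^{\ast }$ and for which the relevant weak* topology on $X^{\ast }$ is $\sigma (X^{\ast },X^{\ast \ast })$: the functions $\overline{\limfunc{co}}^{w^{\ast \ast }}f$ and $\overline{\limfunc{co}}^{w^{\ast \ast }}g$ belong to $\Gamma _{0}(X^{\ast \ast })$, have overlapping domains, and their conjugates are $f^{\ast }$ and $g^{\ast }$; hence $\Psi ^{\ast }=\overline{f^{\ast }\square g^{\ast }}^{\sigma (X^{\ast },X^{\ast \ast })}$.

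The final step, which is routine but where the main care is needed, is justifying that the convex-analytic machinery may be run in the bidual pairing and then switching topologies: since $f^{\ast }\square g^{\ast }$ is convex and both $\sigma (X^{\ast },X^{\ast \ast })$ and $\tau $ are compatible topologies for the pair $\langle X^{\ast },X^{\ast \ast }\rangle $, their closed convex hulls coincide, exactly as observed after (\ref{sum1b}), so $\overline{f^{\ast }\square g^{\ast }}^{\sigma (X^{\ast },X^{\ast \ast })}=\overline{f^{\ast }\square g^{\ast }}^{\tau }$. Combining the three identities gives $\Psi ^{\ast }=\overline{f^{\ast }\square g^{\ast }}^{\tau }$, so that $\Phi ^{\ast }=\Psi ^{\ast }$ reads precisely $(f+g)^{\ast }=\overline{f^{\ast }\square g^{\ast }}^{\tau }$, which is (\ref{no}). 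This establishes the desired equivalence.
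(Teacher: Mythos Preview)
Your proposal is correct and follows essentially the same approach as the paper: both arguments hinge on the identity $(\overline{\limfunc{co}}^{w^{\ast\ast}}h)^{\ast}=h^{\ast}$, the application of (\ref{clinf}) to the pair $\overline{\limfunc{co}}^{w^{\ast\ast}}f,\,\overline{\limfunc{co}}^{w^{\ast\ast}}g\in\Gamma_{0}(X^{\ast\ast})$, and Moreau's theorem in the dual pair $(X^{\ast},X^{\ast\ast})$. The only difference is organizational: the paper treats the two implications separately, whereas you first place both sides of (\ref{equality}) in $\Gamma_{0}(X^{\ast\ast})$ and then invoke the injectivity of conjugation to reduce the equivalence to $\Phi^{\ast}=\Psi^{\ast}$ in one stroke; the underlying computations are the same.
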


\begin{dem}
If\ (\ref{equality})$\ $holds, then (\ref{clinf}) entails 
\begin{eqnarray*}
(f+g)^{\ast } &=&(\overline{\limfunc{co}}^{w^{\ast \ast }}(f+g))^{\ast }=((%
\overline{\limfunc{co}}^{w^{\ast \ast }}f)+(\overline{\limfunc{co}}^{w^{\ast
\ast }}g))^{\ast } \\
&=&\overline{(\overline{\limfunc{co}}^{w^{\ast \ast }}f)^{\ast }\square (%
\overline{\limfunc{co}}^{w^{\ast \ast }}g)^{\ast }}^{\tau }=\overline{%
f^{\ast }\square g^{\ast }}^{\tau },
\end{eqnarray*}%
and (\ref{no}) follows. Conversely, observe that 
\begin{equation*}
\overline{\limfunc{co}}^{w^{\ast \ast }}(f+g)\geq \overline{\limfunc{co}}%
^{w^{\ast \ast }}(\overline{\limfunc{co}}^{w^{\ast \ast }}f+\overline{%
\limfunc{co}}^{w^{\ast \ast }}g)=\left( \overline{\limfunc{co}}^{w^{\ast
\ast }}f\right) +\left( \overline{\limfunc{co}}^{w^{\ast \ast }}g\right)
>-\infty ,
\end{equation*}%
and $\overline{\limfunc{co}}^{w^{\ast \ast }}(f+g)\leq f+g$, so $\overline{%
\limfunc{co}}^{w^{\ast \ast }}(f+g)$ is proper. Then, taking the conjugate
in\ (\ref{no}) and\ applying twice Moreau's theorem in $(X^{\ast },X^{\ast
\ast }),$ we obtain\ 
\begin{eqnarray*}
\overline{\limfunc{co}}^{w^{\ast \ast }}(f+g) &=&(f+g)^{\ast \ast }=\left( 
\overline{f^{\ast }\square g^{\ast }}^{\tau }\right) ^{\ast } \\
&=&\left( f^{\ast }\square g^{\ast }\right) ^{\ast }=f^{\ast \ast }+g^{\ast
\ast }=(\overline{\limfunc{co}}^{w^{\ast \ast }}f)+(\overline{\limfunc{co}}%
^{w^{\ast \ast }}g).
\end{eqnarray*}
\end{dem}

Next, we introduce the following quantity (possibly, $+\infty $) associated
with a function $f:X\rightarrow \overline{\mathbb{R}}$ and $x\in \limfunc{dom%
}f,$ 
\begin{equation}
\varepsilon _{f}(x):=\inf \{\alpha >0:\partial _{\alpha }f(x)\neq \emptyset
\}.  \label{epsilonx}
\end{equation}%
The following lemma provides some information on this quantity.

\begin{lem}
\label{lema2}The following assertions hold, for all function $f:X\rightarrow 
\overline{\mathbb{R}}$ and all $x\in \limfunc{dom}f.$

$(i)$ If $f\in \Gamma _{0}(X),$ then $\varepsilon _{f}(x)=0.$

$(ii)$\ If $f^{\ast }$\ is proper, then $\varepsilon _{f}(x)<+\infty .$

$(iii)$ If $\overline{\limfunc{co}}^{w^{\ast \ast }}f$ is\ proper, then 
\begin{equation*}
\inf_{z\in \limfunc{dom}f}\varepsilon _{f}(z)=0.
\end{equation*}
\end{lem}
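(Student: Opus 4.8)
The plan is to reduce the whole lemma to the single identity
\begin{equation}
\varepsilon _{f}(x)=f(x)-f^{\ast \ast }(x),\quad x\in \limfunc{dom}f,
\tag{$\star$}
\end{equation}
with the convention that the right-hand side is $+\infty $ when $f^{\ast \ast }(x)=-\infty $. To prove $(\star )$ I would unfold the definition of the approximate subdifferential: $x^{\ast }\in \partial _{\alpha }f(x)$ holds exactly when $f(x)+f^{\ast }(x^{\ast })-\langle x^{\ast },x\rangle \leq \alpha $. Minimizing the left-hand side over $x^{\ast }\in X^{\ast }$ and recalling that $f^{\ast \ast }(x)=\sup_{x^{\ast }}\{\langle x^{\ast },x\rangle -f^{\ast }(x^{\ast })\}$ gives $\inf_{x^{\ast }}\{f(x)+f^{\ast }(x^{\ast })-\langle x^{\ast },x\rangle \}=f(x)-f^{\ast \ast }(x)=:\mu $, which is nonnegative by the Young--Fenchel inequality. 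Hence $\partial _{\alpha }f(x)\neq \emptyset $ for every $\alpha >\mu $ and $\partial _{\alpha }f(x)=\emptyset $ for every $\alpha <\mu $, so $\varepsilon _{f}(x)=\max \{0,\mu \}=\mu $, which is $(\star )$.

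Granting $(\star )$, parts $(i)$ and $(ii)$ are immediate. For $(i)$, if $f\in \Gamma _{0}(X)$ then $\overline{\limfunc{co}}f=f$, so the Fenchel--Moreau--Rockafellar theorem yields $f^{\ast \ast }(x)=f(x)$ on $X$ and therefore $\varepsilon _{f}(x)=0$. For $(ii)$, properness of $f^{\ast }$ furnishes some $x_{0}^{\ast }\in \limfunc{dom}f^{\ast }$; then $f^{\ast \ast }(x)\geq \langle x_{0}^{\ast },x\rangle -f^{\ast }(x_{0}^{\ast })>-\infty $, and combined with $f^{\ast \ast }(x)\leq f(x)<+\infty $ this makes $f^{\ast \ast }(x)$ finite, so $(\star )$ gives $\varepsilon _{f}(x)<+\infty $.

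The substance lies in $(iii)$. Writing $h:=\overline{\limfunc{co}}^{w^{\ast \ast }}f$ and assuming $h$ proper, I would first identify $h$ with $f^{\ast \ast }$ on all of $X^{\ast \ast }$: since $h^{\ast }=f^{\ast }$, conjugation gives $h^{\ast \ast }=f^{\ast \ast }$, while Moreau's theorem in the duality $(X^{\ast },X^{\ast \ast })$ gives $h^{\ast \ast }=h$ because $h$ is proper, convex and $w^{\ast \ast }$-lsc; hence $f^{\ast \ast }=h$. In particular $f^{\ast \ast }(z)=h(z)\in \mathbb{R}$ for every $z\in \limfunc{dom}f$, so $(\star )$ specializes to $\varepsilon _{f}(z)=f(z)-h(z)$ and it only remains to show $\inf_{z\in \limfunc{dom}f}(f(z)-h(z))=0$. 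This infimum is nonnegative; were it bounded below by some $c>0$, then $f(z)\geq h(z)+c$ for all $z\in \limfunc{dom}f$, and trivially $\hat{f}\geq h+c$ on all of $X^{\ast \ast }$ since $\hat{f}=+\infty $ off $\limfunc{dom}f$. But $h+c$ is again proper, convex and $w^{\ast \ast }$-lsc, hence an lsc convex minorant of $\hat{f}$ strictly above $h$, contradicting that $h=\overline{\limfunc{co}}^{w^{\ast \ast }}f$ is the supremum (and so the largest) of all lsc convex minorants of $f$. Thus the infimum equals $0$, which is $(iii)$.

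The main obstacle is exactly this last step: once $(\star )$ turns $\varepsilon _{f}(z)$ into the conjugacy gap $f(z)-f^{\ast \ast }(z)$, everything hinges on showing that this gap cannot remain uniformly positive, and the decisive ingredient is the extremal (largest-minorant) characterization of $\overline{\limfunc{co}}^{w^{\ast \ast }}f$ together with its assumed properness, which is what makes $h+c$ available as a competing minorant.
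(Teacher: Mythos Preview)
Your argument is correct. The identity $(\star)$, $\varepsilon_f(x)=f(x)-f^{\ast\ast}(x)$, is established cleanly and does reduce all three assertions to standard conjugacy facts; the contradiction step in $(iii)$ via the largest-minorant characterization of $\overline{\limfunc{co}}^{w^{\ast\ast}}f$ is sound.

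The paper, however, does not pass through $(\star)$. For $(i)$ and $(ii)$ it argues directly from the definition of $\partial_\alpha f(x)$ (picking a single $x^\ast\in\limfunc{dom}f^\ast$ and reading off an $\alpha$ that works), and for $(iii)$ it avoids the bidual identification $h=f^{\ast\ast}$ and the maximal-minorant contradiction altogether: it simply observes that properness of $\overline{\limfunc{co}}^{w^{\ast\ast}}f$ forces $f^\ast$ proper, fixes $x^\ast\in\limfunc{dom}f^\ast$, and uses the defining supremum $f^\ast(x^\ast)=\sup_x\{\langle x^\ast,x\rangle-f(x)\}$ to produce, for each $\rho>0$, a point $x_\rho$ with $f^\ast(x^\ast)+f(x_\rho)-\langle x^\ast,x_\rho\rangle\le\rho$, i.e.\ $x^\ast\in\partial_\rho f(x_\rho)$ and hence $\varepsilon_f(x_\rho)\le\rho$. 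Your route is more conceptual and yields the explicit formula $(\star)$ as a by-product (useful for interpretation), at the cost of invoking Moreau's theorem in the pair $(X^\ast,X^{\ast\ast})$ and the extremal characterization of the closed convex hull; the paper's route is shorter and uses nothing beyond the raw definition of the conjugate.
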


\begin{dem}
$(i)$ If $f\in \Gamma _{0}(X),$ then $\partial _{\alpha }f(x)\neq \emptyset $
for all $\alpha >0.$

$(ii)$ If $f^{\ast }$\ is proper, then $f$ is also proper. Moreover, for
every $x\in \limfunc{dom}f$ and $x^{\ast }\in \limfunc{dom}f^{\ast }$, we
have $f^{\ast }(x^{\ast })+f(x)-\left\langle x^{\ast },x\right\rangle \geq
0. $ So, every scalar $\alpha $\ such that $\alpha >f^{\ast }(x^{\ast
})+f(x)-\left\langle x^{\ast },x\right\rangle $ satisfies $x^{\ast }\in
\partial _{\alpha }f(x),$ and we deduce $\varepsilon _{f}(x)\leq \alpha .$

$(iii)$ Since $\overline{\limfunc{co}}^{w^{\ast \ast }}f$ is proper, its
conjugate $f^{\ast }=(\overline{\limfunc{co}}^{w^{\ast \ast }}f)^{\ast }$ is
also proper\ (e.g., \cite[Proposition 3.1.4]{CHL23}). Take $x^{\ast }\in 
\limfunc{dom}f^{\ast }$ and pick any $\rho >0.$ Then there exists some\ $%
x_{\rho }\in \limfunc{dom}f$ such that 
\begin{equation*}
f^{\ast }(x^{\ast })+f(x_{\rho })-\left\langle x^{\ast },x_{\rho
}\right\rangle \leq \rho ;
\end{equation*}%
that is, $x^{\ast }\in \partial _{\rho }f(x_{\rho })$ and 
\begin{equation*}
\inf_{z\in \limfunc{dom}f}\varepsilon _{f}(z)\leq \varepsilon _{f}(x_{\rho
})\leq \rho .
\end{equation*}%
Hence, $\inf_{z\in \limfunc{dom}f}\varepsilon _{f}(z)=0$ by\ the
arbitrariness of $\rho >0.$
\end{dem}

The following result provides the announced characterizations of (\ref%
{equality}).

\begin{theo}
\label{prop1}For every functions $f$ and $g$ as in Lemma \ref{lema1}, the
following statements are equivalent:$\medskip $

$(i)$ $\overline{\limfunc{co}}^{w^{\ast \ast }}(f+g)=(\overline{\limfunc{co}}%
^{w^{\ast \ast }}f)+(\overline{\limfunc{co}}^{w^{\ast \ast }}g).\medskip $

$(ii)$ Formula (\ref{summ1}) holds,\ for all $x\in \limfunc{dom}f\cap 
\limfunc{dom}g$ and $\varepsilon >\varepsilon _{f+g}(x).\medskip $

$(iii)$ Formula (\ref{sum1b}) holds,\ for all $x\in \limfunc{dom}f\cap 
\limfunc{dom}g$ and $\varepsilon >0.\medskip $

$(iv)$\ The inclusion\ (\ref{sum1d}) holds,\ for all $x\in X$ and $%
\varepsilon >0.$
\end{theo}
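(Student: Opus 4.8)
The plan is to establish the equivalences in a cycle, exploiting Proposition~\ref{lema1} as the bridge between the regularization identity $(i)$ and the conjugation sum rule $(\ref{no})$, and then relating that conjugation rule to the $\varepsilon$-subdifferential formulae. First I would prove $(i)\Leftrightarrow(iii)$, since $(iii)$ is the condition already appearing in \cite{CGH12} and is the most robust link to $(\ref{no})$. The key observation is that $(\ref{no})$ asserts $(f+g)^{\ast}=\overline{f^{\ast}\square g^{\ast}}^{\tau}$, and the value of the closed inf-convolution at a point $x^{\ast}$ is exactly governed by which $x^{\ast}$ lie in $\partial_{\varepsilon}(f+g)(x)$: indeed, $x^{\ast}\in\partial_{\alpha}f(x)+\partial_{\beta}g(x)$ with $\alpha+\beta=\varepsilon$ translates, via the definition of the $\varepsilon$-subdifferential and of the conjugate, into the statement that $x^{\ast}$ is an approximate minimizer of the inf-convolution. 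So the plan is to show that the right-hand side of $(\ref{sum1b})$ is precisely the set of $x^{\ast}$ at which $\overline{f^{\ast}\square g^{\ast}}^{\tau}$ attains the relevant inequality, and the left-hand side $\partial_{\varepsilon}(f+g)(x)$ is the analogous set for $(f+g)^{\ast}$; the two descriptions coincide for all $x,\varepsilon$ iff $(\ref{no})$ holds, hence iff $(i)$ holds.

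Next I would treat $(ii)\Rightarrow(iii)$ and $(iii)\Rightarrow(ii)$. The containment $\subset$ in both $(\ref{summ1})$ and $(\ref{sum1b})$ is automatic and holds unconditionally (it is the easy direction of approximate subdifferential calculus: any $x_{1}^{\ast}\in\partial_{\varepsilon_{1}}f(x)$, $x_{2}^{\ast}\in\partial_{\varepsilon_{2}}g(x)$ with $\varepsilon_{1}+\varepsilon_{2}=\varepsilon$ sums to an element of $\partial_{\varepsilon}(f+g)(x)$, and closedness of the target lets one pass to the closure). Thus the content of each formula is the reverse inclusion, and $(\ref{summ1})$ is formally sharper because it fixes $\varepsilon_{1}+\varepsilon_{2}=\varepsilon$ rather than taking an intersection over $\alpha>\varepsilon$. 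To pass from the weaker $(iii)$ to the sharper $(ii)$, the role of the threshold $\varepsilon_{f+g}(x)$ defined in $(\ref{epsilonx})$ is crucial: for $\varepsilon>\varepsilon_{f+g}(x)$ there is some $\delta<\varepsilon$ with $\partial_{\delta}(f+g)(x)\neq\emptyset$, which gives enough slack to absorb the $\bigcap_{\alpha>\varepsilon}$ into a single level $\alpha=\varepsilon$ by a standard $\varepsilon$-enlargement argument on the closures. I would use Lemma~\ref{lema2}$(iii)$ to guarantee that such points exist and that the infimum defining $\varepsilon_{f+g}$ is meaningful under the standing properness hypothesis inherited from Proposition~\ref{lema1}.

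Finally I would close the cycle through $(iv)$. The inclusion $(\ref{sum1d})$ is visibly the coarsest of the four conditions—it only asks for a single-level containment into $\partial_{2\varepsilon}f(x)+\partial_{2\varepsilon}g(x)$ with the convenient factor $2$—so $(iii)\Rightarrow(iv)$ should follow by specializing $\alpha=2\varepsilon$ and $\varepsilon_{1}=\varepsilon_{2}=\varepsilon$ inside $(\ref{sum1b})$ and dropping the intersection. The substantive implication is $(iv)\Rightarrow(i)$, and here I would argue contrapositively or directly via $(\ref{no})$: an estimate of the form $\partial_{\varepsilon}(f+g)(x)\subset\operatorname{cl}^{\tau}(\partial_{2\varepsilon}f(x)+\partial_{2\varepsilon}g(x))$ for all $x$ and $\varepsilon$ is exactly what is needed to show that the conjugate $(f+g)^{\ast}$ is dominated by (hence equal to) the closed inf-convolution $\overline{f^{\ast}\square g^{\ast}}^{\tau}$, since each point in the domain of the latter is captured by approximate subgradients at the factor level. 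I expect the main obstacle to be this last step: controlling the error constants so that the factor-$2$ slack in $(\ref{sum1d})$, which looks weaker than $(\ref{summ1})$, still forces the exact conjugation identity. The resolution should come from letting $\varepsilon\downarrow\varepsilon_{f+g}(x)$ and invoking Lemma~\ref{lema2}$(iii)$ together with the properness granted by the continuous affine minorant hypothesis, so that the doubled accuracy washes out in the limit and $(\ref{no})$—equivalently $(i)$ by Proposition~\ref{lema1}—is recovered.
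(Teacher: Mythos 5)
Your overall architecture (a cycle of implications with Proposition \ref{lema1} as the bridge between $(i)$ and the conjugation rule (\ref{no}), Lemma \ref{lema2} supplying the threshold $\varepsilon _{f+g}$, and $(iii)\Rightarrow (iv)$ obtained by specializing $\alpha =2\varepsilon $ in (\ref{sum1b})) matches the paper's, and that last step is carried out exactly as you describe. The genuine gap is in your passage from $(iii)$ to $(ii)$. Writing $A_{\alpha }:=\bigcup_{\varepsilon _{1}+\varepsilon _{2}=\alpha }(\partial _{\varepsilon _{1}}f(x)+\partial _{\varepsilon _{2}}g(x))$, the family $(A_{\alpha })_{\alpha }$ is nondecreasing, so $\bigcap_{\alpha >\varepsilon }\limfunc{cl}^{\tau }(A_{\alpha })\supset \limfunc{cl}^{\tau }(A_{\varepsilon })$ is clear, but there is no ``standard $\varepsilon $-enlargement argument'' that collapses a decreasing intersection of closures onto the closure of the limiting set: nothing forces $\bigcap_{\alpha >\varepsilon }\limfunc{cl}^{\tau }(A_{\alpha })\subset \limfunc{cl}^{\tau }(A_{\varepsilon })$ for a monotone family. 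The paper's mechanism (it proves $(i)\Rightarrow (ii)$, logically the same target) is where the real work sits: given $x^{\ast }\in \partial _{\varepsilon }(f+g)(x)$ with $\varepsilon >\varepsilon _{f+g}(x)$, one first uses $\partial _{\varepsilon }(f+g)(x)=\limfunc{cl}^{\tau }\bigl(\bigcup_{0<\delta <\varepsilon }\partial _{\varepsilon -\delta }(f+g)(x)\bigr)$ (valid precisely because the union is nonempty, which is what $\varepsilon >\varepsilon _{f+g}(x)$ buys, via \cite[Proposition 4.1.9]{CHL23}) to replace $x^{\ast }$ by a nearby $y^{\ast }\in \partial _{\varepsilon -\delta }(f+g)(x)$, and then feeds $y^{\ast }$ into $(f+g)^{\ast }=\overline{f^{\ast }\square g^{\ast }}^{\tau }$ to extract a nearby exact splitting $z^{\ast }=z_{1}^{\ast }+z_{2}^{\ast }$ with total Fenchel gap below $\varepsilon $; the split $\varepsilon _{1}+\varepsilon _{2}=\varepsilon $ then comes from rearranging the gaps. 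The slack $\delta $ is consumed by the inf-convolution approximation, not by a topological absorption, so without routing through (\ref{no}) your step does not close. (A side remark: the automatic inclusion in (\ref{summ1}) and (\ref{sum1b}) is that the closure of the union of sums is contained in $\partial _{\varepsilon }(f+g)(x)$, the opposite of the ``$\subset $'' you name, although what you describe is indeed the easy direction.)

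For $(iv)\Rightarrow (i)$ your route genuinely differs from the paper's and is in fact viable. The paper shows $\partial _{\varepsilon }(f+g)(x)\subset \partial _{4\varepsilon }\bigl((\overline{\limfunc{co}}^{w^{\ast \ast }}f)+(\overline{\limfunc{co}}^{w^{\ast \ast }}g)\bigr)(x)$ and then invokes the nonconvex integration theorem of \cite{CGH18} to conclude that the two regularizations differ by an additive constant, which is killed using Lemma \ref{lema2}$(iii)$; your plan of deriving (\ref{no}) directly would avoid that external input. To make it work, however, the correct mechanism is not ``letting $\varepsilon \downarrow \varepsilon _{f+g}(x)$'' at a fixed $x$, but selecting, for each $\rho >0$ and each $x^{\ast }\in \limfunc{dom}(f+g)^{\ast }$, a point $x_{\rho }$ with $x^{\ast }\in \partial _{\rho }(f+g)(x_{\rho })$ (the content of Lemma \ref{lema2}$(iii)$); applying (\ref{sum1d}) at $x_{\rho }$ then yields approximate splittings with total gap at most $4\rho $, whence $\overline{f^{\ast }\square g^{\ast }}^{\tau }(x^{\ast })\leq (f+g)^{\ast }(x^{\ast })+4\rho $, and the factor $4$ washes out as $\rho \downarrow 0$. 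As written this step is only a stated hope, and together with the gap above the proposal remains a plan rather than a proof.
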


\begin{dem}
The equivalence between $(i)$ and $(iii)$ is established in \cite[%
Proposition 5.5]{CGH12}, and we prove next\ that $(i)\Longleftrightarrow
(ii) $,\ $(iv)\implies (i)$ and $(iii)\implies (iv).$

$(i)\implies (ii).$\ Assume that $(i)$ holds or, equivalently according to
Lemma \ref{lema1}, 
\begin{equation}
(f+g)^{\ast }=\overline{f^{\ast }\square g^{\ast }}^{\tau }.  \label{es}
\end{equation}%
Fix $x\in \limfunc{dom}f\cap \limfunc{dom}g$ and $\varepsilon >\varepsilon
_{f+g}(x)$ (remember that $\varepsilon _{f+g}(x)<\infty $ by Lemma \ref%
{lema2}$(ii)$). Take $x^{\ast }\in \partial _{\varepsilon }(f+g)(x)$ and a $%
\theta $-neighborhood $U$ in $(X^{\ast },\tau ).$ Since $\varepsilon
>\varepsilon _{f+g}(x)$, there exists some $0<\alpha <\varepsilon $ such
that $\partial _{\alpha }(f+g)(x)\neq \emptyset ;$ that is, $\cup _{0<\delta
<\varepsilon }\partial _{\varepsilon -\delta }(f+g)(x)\neq \emptyset .$ So,
by applying \cite[Proposition 4.1.9]{CHL23} to the function $\hat{f}+\hat{g}$
defined on $X^{\ast \ast }$, we obtain 
\begin{eqnarray*}
x^{\ast } &\in &\partial _{\varepsilon }(f+g)(x)=\partial _{\varepsilon }(%
\hat{f}+\hat{g})(x) \\
&=&\limfunc{cl}\nolimits^{\tau }\left( \tbigcup_{0<\delta <\varepsilon
}\partial _{\varepsilon -\delta }(\hat{f}+\hat{g})(x)\right) =\limfunc{cl}%
\nolimits^{\tau }\left( \tbigcup_{0<\delta <\varepsilon }\partial
_{\varepsilon -\delta }(f+g)(x)\right) .
\end{eqnarray*}%
Hence, we find $\delta \in (0,\varepsilon )$ and $y^{\ast }\in \partial
_{\varepsilon -\delta }(f+g)(x)$ such that $x^{\ast }-y^{\ast }\in U.$
Moreover, since (remember (\ref{es})) 
\begin{eqnarray*}
(f+g)(x)+\overline{f^{\ast }\square g^{\ast }}^{\tau }(y^{\ast })
&=&(f+g)(x)+(f+g)^{\ast }(y^{\ast }) \\
&\leq &\left\langle y^{\ast },x\right\rangle +\varepsilon -\delta ,
\end{eqnarray*}%
we find some $z^{\ast }\in y^{\ast }+U$ such that 
\begin{equation*}
(f+g)(x)+\left( f^{\ast }\square g^{\ast }\right) (z^{\ast })\leq
\left\langle z^{\ast },x\right\rangle +\varepsilon -\delta /2<\left\langle
z^{\ast },x\right\rangle +\varepsilon .
\end{equation*}%
In other words, there are\ $z_{1}^{\ast },$ $z_{2}^{\ast }\in X^{\ast }$
such that $z_{1}^{\ast }+z_{2}^{\ast }=z^{\ast }$ and 
\begin{equation*}
(f+g)(x)+f^{\ast }(z_{1}^{\ast })+g^{\ast }(z_{2}^{\ast })\leq \left\langle
z_{1}^{\ast }+z_{2}^{\ast },x\right\rangle +\varepsilon ;
\end{equation*}%
that is, rearranging these terms,%
\begin{equation*}
(f(x)+f^{\ast }(z_{1}^{\ast })-\left\langle z_{1}^{\ast },x\right\rangle
)+(g(x)+g^{\ast }(z_{2}^{\ast })-\left\langle z_{2}^{\ast },x\right\rangle
)\leq \varepsilon .
\end{equation*}%
Now, we denote\ 
\begin{equation*}
\varepsilon _{1}:=f(x)+f^{\ast }(z_{1}^{\ast })-\left\langle z_{1}^{\ast
},x\right\rangle \text{ and }\varepsilon _{2}:=\varepsilon -\varepsilon _{1},
\end{equation*}%
so that $\varepsilon _{1}\geq 0$ and $\varepsilon _{2}\geq g(x)+g^{\ast
}(z_{2}^{\ast })-\left\langle z_{2}^{\ast },x\right\rangle \geq 0,$ and we
deduce that $\varepsilon _{1}+\varepsilon _{2}=\varepsilon ,$ $z_{1}^{\ast
}\in \partial _{\varepsilon _{1}}f(x)$ and $z_{2}^{\ast }\in \partial
_{\varepsilon _{2}}g(x).$ Thus,\ 
\begin{equation*}
x^{\ast }\in y^{\ast }+U\subset z^{\ast }+2U=z_{1}^{\ast }+z_{2}^{\ast
}+2U\subset \partial _{\varepsilon _{1}}f(x)+\partial _{\varepsilon
_{2}}g(x)+2U,
\end{equation*}%
and\ $(iii)$ follows by the arbitrariness of $U$.

$(ii)\implies (i).$ As observed in the proof of Lemma \ref{lema2}, remember\
that the function $\overline{\limfunc{co}}^{w^{\ast \ast }}(f+g)$ is proper
and, so, its conjugate $(f+g)^{\ast }$ is also proper. First, we are going
to prove that, for every given $x^{\ast }\in \limfunc{dom}(f+g)^{\ast },$ 
\begin{equation}
((\overline{\limfunc{co}}^{w^{\ast \ast }}f)+(\overline{\limfunc{co}}%
^{w^{\ast \ast }}g))^{\ast }(x^{\ast })\leq (f+g)^{\ast }(x^{\ast }).
\label{des1}
\end{equation}%
Fix $\varepsilon >0$ and choose $x_{\varepsilon }\in X$ such that 
\begin{equation}
(f+g)^{\ast }(x^{\ast })+f(x_{\varepsilon })+g(x_{\varepsilon
})-\left\langle x^{\ast },x_{\varepsilon }\right\rangle <\varepsilon ;
\label{in1}
\end{equation}%
that is, $x^{\ast }\in \partial _{\varepsilon }(f+g)(x_{\varepsilon })$ and $%
\varepsilon _{f+g}(x_{\varepsilon })<\varepsilon .$ So, by $(ii)$ we have\ 
\begin{equation}
x^{\ast }\in \limfunc{cl}\nolimits^{\tau }\left( \tbigcup_{\substack{ %
\varepsilon _{1}+\varepsilon _{2}=\varepsilon  \\ \varepsilon
_{1},\varepsilon _{2}\geq 0}}\partial _{\varepsilon _{1}}f(x_{\varepsilon
})+\partial _{\varepsilon _{2}}g(x_{\varepsilon })\right) ,  \label{dos}
\end{equation}%
which reads, due to the inclusions $\partial _{\varepsilon
_{1}}f(x_{\varepsilon })\subset \partial _{\varepsilon _{1}}(\overline{%
\limfunc{co}}^{w^{\ast \ast }}f)(x_{\varepsilon })$ and $\partial
_{\varepsilon _{2}}g(x_{\varepsilon })\subset \partial _{\varepsilon _{2}}(%
\overline{\limfunc{co}}^{w^{\ast \ast }}g)(x_{\varepsilon }),$%
\begin{eqnarray}
x^{\ast } &\in &\limfunc{cl}\nolimits^{\tau }\left( \tbigcup_{\substack{ %
\varepsilon _{1}+\varepsilon _{2}=\varepsilon  \\ \varepsilon
_{1},\varepsilon _{2}\geq 0}}\partial _{\varepsilon _{1}}(\overline{\limfunc{%
co}}^{w^{\ast \ast }}f)(x_{\varepsilon })+\partial _{\varepsilon _{2}}(%
\overline{\limfunc{co}}^{w^{\ast \ast }}g)(x_{\varepsilon })\right)  \notag
\\
&\subset &\limfunc{cl}\nolimits^{\tau }(\partial _{\varepsilon }((\overline{%
\limfunc{co}}^{w^{\ast \ast }}f)+(\overline{\limfunc{co}}^{w^{\ast \ast
}}g))(x_{\varepsilon }))=\partial _{\varepsilon }((\overline{\limfunc{co}}%
^{w^{\ast \ast }}f)+(\overline{\limfunc{co}}^{w^{\ast \ast
}}g))(x_{\varepsilon });  \notag
\end{eqnarray}%
that is, 
\begin{equation}
((\overline{\limfunc{co}}^{w^{\ast \ast }}f)+(\overline{\limfunc{co}}%
^{w^{\ast \ast }}g))^{\ast }(x^{\ast })+(\overline{\limfunc{co}}^{w^{\ast
\ast }}f)(x_{\varepsilon })+(\overline{\limfunc{co}}^{w^{\ast \ast
}}g)(x_{\varepsilon })-\left\langle x^{\ast },x_{\varepsilon }\right\rangle
\leq \varepsilon .  \label{inv}
\end{equation}%
At the same time, due to (\ref{dos}), the sets $\partial _{\varepsilon
_{1}}f(x_{\varepsilon })$ and $\partial _{\varepsilon _{2}}g(x_{\varepsilon
})$ are nonempty for some $\varepsilon _{1},\varepsilon _{2}\geq 0$ such
that $\varepsilon _{1}+\varepsilon _{2}=\varepsilon ,$ thus\ (e.g., \cite[%
Exercise 62]{CHL23}) 
\begin{equation*}
(\overline{\limfunc{co}}^{w^{\ast \ast }}f)(x_{\varepsilon })\geq
f(x_{\varepsilon })-\varepsilon _{1},\text{ }(\overline{\limfunc{co}}%
^{w^{\ast \ast }}g)(x_{\varepsilon })\geq g(x_{\varepsilon })-\varepsilon
_{2},
\end{equation*}%
and\ (\ref{in1}) and (\ref{inv}) entail%
\begin{multline*}
((\overline{\limfunc{co}}^{w^{\ast \ast }}f)+(\overline{\limfunc{co}}%
^{w^{\ast \ast }}g))^{\ast }(x^{\ast })-(f+g)^{\ast }(x^{\ast }) \\
\leq ((\overline{\limfunc{co}}^{w^{\ast \ast }}f)+(\overline{\limfunc{co}}%
^{w^{\ast \ast }}g))^{\ast }(x^{\ast })+f(x_{\varepsilon })+g(x_{\varepsilon
})-\left\langle x^{\ast },x_{\varepsilon }\right\rangle \\
\leq ((\overline{\limfunc{co}}^{w^{\ast \ast }}f)+(\overline{\limfunc{co}}%
^{w^{\ast \ast }}g))^{\ast }(x^{\ast })+(\overline{\limfunc{co}}^{w^{\ast
\ast }}f)(x_{\varepsilon })+(\overline{\limfunc{co}}^{w^{\ast \ast
}}g)(x_{\varepsilon })-\left\langle x^{\ast },x_{\varepsilon }\right\rangle
+\varepsilon \\
\leq 2\varepsilon .
\end{multline*}%
Hence, (\ref{des1}) follows by the arbitrariness of $\varepsilon $.
Moreover, since\ the\ inequality $((\overline{\limfunc{co}}^{w^{\ast \ast
}}f)+(\overline{\limfunc{co}}^{w^{\ast \ast }}g))^{\ast }\geq (f+g)^{\ast }$
always holds, we deduce that $((\overline{\limfunc{co}}^{w^{\ast \ast }}f)+(%
\overline{\limfunc{co}}^{w^{\ast \ast }}g))^{\ast }=(f+g)^{\ast }$.
Therefore $(i)$ follows by applying Moreau's theorem to the last equality in
the pair $(X^{\ast },X^{\ast \ast }).$

$(iii)\implies (iv).$ For any\ $x\in X$ and $\varepsilon >0,$ $(iii)$
implies that 
\begin{equation}
\partial _{\varepsilon }(f+g)(x)\subset \limfunc{cl}\nolimits^{\tau
}(\partial _{2\varepsilon }f(x)+\partial _{2\varepsilon }g(x)),
\label{est0v}
\end{equation}%
which is $(iv).$

$(iv)\implies (i).$ Fix $x\in X$ and $\varepsilon >0.$ Since $\partial
_{2\varepsilon }f(x)\subset \partial _{2\varepsilon }(\overline{\limfunc{co}}%
^{w^{\ast \ast }}f)(x)$ and $\partial _{2\varepsilon }g(x)\subset \partial
_{2\varepsilon }(\overline{\limfunc{co}}^{w^{\ast \ast }}g)(x),$ $(iv)$
entails 
\begin{eqnarray*}
\partial _{\varepsilon }(f+g)(x) &\subset &\limfunc{cl}\nolimits^{\tau
}(\partial _{2\varepsilon }(\overline{\limfunc{co}}^{w^{\ast \ast
}}f)(x)+\partial _{2\varepsilon }(\overline{\limfunc{co}}^{w^{\ast \ast
}}g)(x)) \\
&\subset &\limfunc{cl}\nolimits^{\tau }(\partial _{4\varepsilon }((\overline{%
\limfunc{co}}^{w^{\ast \ast }}f)+(\overline{\limfunc{co}}^{w^{\ast \ast
}}g))(x)) \\
&=&\partial _{4\varepsilon }((\overline{\limfunc{co}}^{w^{\ast \ast }}f)+(%
\overline{\limfunc{co}}^{w^{\ast \ast }}g))(x).
\end{eqnarray*}%
Now, since $(f+g)^{\ast }=(\overline{\limfunc{co}}^{w^{\ast \ast
}}(f+g))^{\ast }$ is proper, by applying \cite[Corollary 3.3 (see, also,
Remark 3.2(i))]{CGH18} in $X^{\ast \ast }$ (see, also, \cite{CHP17, CHS16,
LoVo14}) we deduce that 
\begin{equation}
\overline{\limfunc{co}}^{w^{\ast \ast }}(f+g)=\overline{\limfunc{co}}%
^{w^{\ast \ast }}(\hat{f}+\hat{g})=(\overline{\limfunc{co}}^{w^{\ast \ast
}}f)+(\overline{\limfunc{co}}^{w^{\ast \ast }}g)+c,  \label{est}
\end{equation}%
for some constant $c\in \mathbb{R}$. It is clear that $c\geq 0$ and let us
check that $c=0.$ Since $\overline{\limfunc{co}}^{w^{\ast \ast }}(f+g)$ is
proper, according to Lemma \ref{lema2}$(iii),$ for every $\varepsilon >0$
there exists some $x_{\varepsilon }\in X$ such that $\varepsilon
_{f+g}(x_{\varepsilon })<\varepsilon ;$ that is, $\partial _{\varepsilon
}(f+g)(x_{\varepsilon })\neq \emptyset .$ Thus, by (\ref{est0v}), the sets $%
\partial _{2\varepsilon }f(x_{\varepsilon })$ and $\partial _{2\varepsilon
}g(x)$ are nonempty, and so\ 
\begin{equation*}
(\overline{\limfunc{co}}^{w^{\ast \ast }}f)(x_{\varepsilon })\geq
f(x_{\varepsilon })-2\varepsilon ,\text{ }(\overline{\limfunc{co}}^{w^{\ast
\ast }}g)(x_{\varepsilon })\geq g(x_{\varepsilon })-2\varepsilon .
\end{equation*}%
Thus,\ (\ref{est}) reads\ 
\begin{eqnarray*}
c &=&\overline{\limfunc{co}}^{w^{\ast \ast }}(f+g)(x_{\varepsilon })-(%
\overline{\limfunc{co}}^{w^{\ast \ast }}f)(x_{\varepsilon })-(\overline{%
\limfunc{co}}^{w^{\ast \ast }}g)(x_{\varepsilon }) \\
&\leq &\overline{\limfunc{co}}^{w^{\ast \ast }}(f+g)(x_{\varepsilon
})-f(x_{\varepsilon })-g(x_{\varepsilon })+4\varepsilon \leq 4\varepsilon ,
\end{eqnarray*}%
and taking $\varepsilon \downarrow 0$ we conclude that\ $c=0.$
\end{dem}

\section{Enhanced criteria in the convex framework\label{enh}}

In this section we explore other criteria for the validity of\ 
\begin{equation}
\overline{f+g}^{w^{\ast \ast }}=\overline{f}^{w^{\ast \ast }}+\overline{g}%
^{w^{\ast \ast }},  \label{equalityco}
\end{equation}%
for convex functions $f$ and $g$ defined on the lcs $X.$ Compared with the
previous section, the conditions now are expressed in terms of exact and
fuzzy subdifferential calculus rules for the sum $f+g.$

We will apply the following lemma, which is also of independent interest.
Actually, it is a compact form of \cite[Proposition 1]{Ro70} (see, also, 
\cite[Lemma 2.3]{CGH18} and \cite[Theorem 2.2 and Remark 2.4]{Vo08} for
other variants in locally convex spaces). A proof is given for completeness.

\begin{lem}
\label{lembb} Assume that\ $X$ is Banach. If $f\in \Gamma _{0}(X),$ then\
for all $z\in X^{\ast \ast }$\ we have that 
\begin{equation}
\partial \bar{f}^{w^{\ast \ast }}(z)=\tbigcap_{U\in \mathcal{N}}~\limfunc{cl}%
\nolimits^{\left\Vert \cdot \right\Vert _{\ast }}\left( \tbigcup_{x\in
z+U}\partial f(x)\right) ,  \label{mm}
\end{equation}%
where $\left\Vert \cdot \right\Vert _{\ast }$ is the dual norm in $X^{\ast }$
and $\mathcal{N}:=\mathcal{N}_{X^{\ast \ast }}(w^{\ast \ast }).$
\end{lem}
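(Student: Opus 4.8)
Write $h:=\bar{f}^{w^{\ast \ast }}$ and denote by $R$ the right-hand side of (\ref{mm}). Since $f\in \Gamma _{0}(X)$ admits a continuous affine minorant, $h\in \Gamma _{0}(X^{\ast \ast })$ is proper and $h^{\ast }=f^{\ast }$; hence, by the Fenchel--Young inequality in the pairing $(X^{\ast \ast },X^{\ast })$, for $z\in X^{\ast \ast }$ and $x^{\ast }\in X^{\ast }$ one has $x^{\ast }\in \partial h(z)$ if and only if $h(z)+f^{\ast }(x^{\ast })=\langle x^{\ast },z\rangle $. Moreover, for every $x\in X$ the relation $h\leq f$ together with $h^{\ast }=f^{\ast }$ gives $\partial f(x)\subseteq \partial h(x)$. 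I would prove the two inclusions $\partial h(z)\subseteq R$ and $R\subseteq \partial h(z)$ separately.

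For $\partial h(z)\subseteq R$, fix $x^{\ast }\in \partial h(z)$, a neighborhood $U\in \mathcal{N}$ and $\eta >0$. Using $h(z)=\liminf_{y\rightarrow z,\,y\in X}f(y)$ from (\ref{marco1}) and passing to a smaller $U^{\prime }\in \mathcal{N}$ contained in $\{w:|\langle x^{\ast },w\rangle |\leq \eta \}$, I can select $y\in (z+U^{\prime })\cap X$ with $f(y)\leq h(z)+\eta $; the identity $h(z)+f^{\ast }(x^{\ast })=\langle x^{\ast },z\rangle $ together with $|\langle x^{\ast },z-y\rangle |\leq \eta $ then yields $f(y)+f^{\ast }(x^{\ast })\leq \langle x^{\ast },y\rangle +2\eta $, i.e. $x^{\ast }\in \partial _{2\eta }f(y)$. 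At this point I invoke the Br\o ndsted--Rockafellar theorem (this is where completeness of $X$ is used): there exist $\bar{y}\in X$ and $\bar{y}^{\ast }\in \partial f(\bar{y})$ with $\left\Vert \bar{y}-y\right\Vert \leq \sqrt{2\eta }$ and $\left\Vert \bar{y}^{\ast }-x^{\ast }\right\Vert _{\ast }\leq \sqrt{2\eta }$. Since any basic $w^{\ast \ast }$-neighborhood contains a small norm ball of $X$ (it is cut out by finitely many functionals of $X^{\ast }$), taking $U^{\prime }\subseteq \tfrac{1}{2}U$ and $\eta $ small enough forces $\bar{y}-z=(\bar{y}-y)+(y-z)\in \tfrac{1}{2}U+\tfrac{1}{2}U=U$, so $\bar{y}\in z+U$. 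Thus $\bar{y}^{\ast }\in \bigcup_{x\in z+U}\partial f(x)$ lies within $\sqrt{2\eta }$ of $x^{\ast }$; letting $\eta \downarrow 0$ gives $x^{\ast }\in \limfunc{cl}^{\left\Vert \cdot \right\Vert _{\ast }}(\bigcup_{x\in z+U}\partial f(x))$, and intersecting over $U\in \mathcal{N}$ yields $x^{\ast }\in R$.

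For $R\subseteq \partial h(z)$, take $x^{\ast }\in R$. For each $U\in \mathcal{N}$ and $k\in \N$, membership of $x^{\ast }$ in $\limfunc{cl}^{\left\Vert \cdot \right\Vert _{\ast }}(\bigcup_{x\in z+U}\partial f(x))$ provides $x_{U,k}\in (z+U)\cap X$ and $u_{U,k}^{\ast }\in \partial f(x_{U,k})$ with $\left\Vert u_{U,k}^{\ast }-x^{\ast }\right\Vert _{\ast }\leq 1/k$; directing the pairs $(U,k)$ by reverse inclusion on $U$ and the natural order on $k$ produces a net with $x_{\nu }\rightarrow z$ in $w^{\ast \ast }$ and $u_{\nu }^{\ast }\rightarrow x^{\ast }$ in $\left\Vert \cdot \right\Vert _{\ast }$, where $u_{\nu }^{\ast }\in \partial f(x_{\nu })\subseteq \partial h(x_{\nu })$. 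The $w^{\ast \ast }$-lower semicontinuity of $h$ gives $h(z)\leq \liminf_{\nu }f(x_{\nu })$, the norm lower semicontinuity of $f^{\ast }$ gives $f^{\ast }(x^{\ast })\leq \liminf_{\nu }f^{\ast }(u_{\nu }^{\ast })$, and the Fenchel equality $f(x_{\nu })+f^{\ast }(u_{\nu }^{\ast })=\langle u_{\nu }^{\ast },x_{\nu }\rangle $ then yields $h(z)+f^{\ast }(x^{\ast })\leq \liminf_{\nu }\langle u_{\nu }^{\ast },x_{\nu }\rangle $. Combined with the Fenchel--Young inequality, this reduces the whole inclusion to the reverse pairing estimate $\limsup_{\nu }\langle u_{\nu }^{\ast },x_{\nu }\rangle \leq \langle x^{\ast },z\rangle $, which forces $h(z)+f^{\ast }(x^{\ast })=\langle x^{\ast },z\rangle $, i.e. $x^{\ast }\in \partial h(z)$.

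The main obstacle is precisely this last pairing estimate. Writing $\langle u_{\nu }^{\ast },x_{\nu }\rangle =\langle x^{\ast },x_{\nu }\rangle +\langle u_{\nu }^{\ast }-x^{\ast },x_{\nu }\rangle $, the first term converges to $\langle x^{\ast },z\rangle $ because $x_{\nu }\rightarrow z$ in $\sigma (X^{\ast \ast },X^{\ast })$ and $x^{\ast }\in X^{\ast }$, but the second couples the strong convergence $u_{\nu }^{\ast }\rightarrow x^{\ast }$ against the merely $w^{\ast \ast }$-convergent, hence possibly norm-unbounded, net $x_{\nu }$. To close the argument I would refine the selection of the approximating points so that the coupling term vanishes: choosing each $U$ to additionally bound $\langle x^{\ast },\cdot \rangle $ and arranging the net $(x_{\nu })$ to be norm-bounded (so that $\left\Vert u_{\nu }^{\ast }-x^{\ast }\right\Vert _{\ast }\left\Vert x_{\nu }\right\Vert \rightarrow 0$), after which the standard strong$\times$weak$^{\ast }$ pairing lemma applies. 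Securing this boundedness, where the interplay between the dual norm on $X^{\ast }$ and the $w^{\ast \ast }$-topology on $X^{\ast \ast }$ must be handled with care, is the delicate point of the proof.
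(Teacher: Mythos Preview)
Your argument for the inclusion $\partial h(z)\subseteq R$ is essentially the paper's: both pick a point of $X$ that is $w^{\ast\ast}$-close to $z$ and at which $x^{\ast}$ is an $\varepsilon$-subgradient of $f$, then invoke Br\o ndsted--Rockafellar to produce an exact subgradient nearby in norm, and conclude by shrinking $\varepsilon$. This half is fine.

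The reverse inclusion $R\subseteq\partial h(z)$, however, is left genuinely open in your proposal. You correctly reduce the matter to the pairing estimate $\limsup_{\nu}\langle u_{\nu}^{\ast},x_{\nu}\rangle\leq\langle x^{\ast},z\rangle$, and you propose to kill the cross-term $\langle u_{\nu}^{\ast}-x^{\ast},x_{\nu}\rangle$ by ``arranging the net $(x_{\nu})$ to be norm-bounded''. That strategy cannot succeed in general: a basic $w^{\ast\ast}$-neighborhood $U$ is a finite intersection of slabs $\{|\langle\,\cdot\,,y_{j}^{\ast}\rangle|<\delta\}$, so $(z+U)\cap X$ is never norm-bounded once $\dim X>1$; and when $z\in X^{\ast\ast}\setminus X$, there is no reason one can simultaneously keep $x_{\nu}$ bounded, $x_{\nu}\overset{w^{\ast\ast}}{\to}z$, and $u_{\nu}^{\ast}\in\partial f(x_{\nu})$ with $\|u_{\nu}^{\ast}-x^{\ast}\|_{\ast}\to 0$. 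So the ``strong$\times$weak$^{\ast}$ pairing lemma'' you have in mind is simply unavailable, and what you label the delicate point is in fact a gap.

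The paper organizes the converse differently. It does not pass through the limit of $\langle u_{\nu}^{\ast},x_{\nu}\rangle$; instead, for fixed $U\in\mathcal{N}$ and $\varepsilon>0$ it selects $x\in(z+U)\cap X$ and $z^{\ast}\in\partial f(x)$ with $\|z^{\ast}-x^{\ast}\|_{\ast}\leq\varepsilon$, uses the $\|\cdot\|_{\ast}$-lower semicontinuity of $f^{\ast}$ to obtain $f^{\ast}(x^{\ast})\leq f^{\ast}(z^{\ast})+\varepsilon$, and chains
\[
\inf_{y\in z+U}f(y)+f^{\ast}(x^{\ast})\leq f(x)+f^{\ast}(z^{\ast})+\varepsilon=\langle x,z^{\ast}\rangle+\varepsilon\leq\langle x,x^{\ast}\rangle+\|x\|\varepsilon+\varepsilon,
\]
after which it ``takes the supremum over $U$'' and lets $\varepsilon\downarrow 0$. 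You will notice that exactly the same uncontrolled quantity appears, namely $\|x\|\varepsilon$ with $x$ depending on $(U,\varepsilon)$, and the paper does not explain how this term is disposed of either; so the paper is at least as terse on this point as your proposal is incomplete. What is clear is that the repair you suggest (forcing norm-boundedness of the approximating primal points) is not the correct mechanism.
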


\begin{proof}
Pick\ $z\in X^{\ast \ast },$ $x^{\ast }\in \partial \bar{f}^{w^{\ast \ast
}}(z),$ $U\in z+\mathcal{N}$ , and choose $\varepsilon >0$ and $V\in z+%
\mathcal{N}$ such that $V+\sqrt{\varepsilon }B_{X}\subset U.$\ Then 
\begin{equation*}
\bar{f}^{w^{\ast \ast }}(z)+f^{\ast }(x^{\ast })=\bar{f}^{w^{\ast \ast
}}(z)+(\bar{f}^{w^{\ast \ast }})^{\ast }(x^{\ast })\leq \left\langle
z,x^{\ast }\right\rangle <\left\langle z,x^{\ast }\right\rangle +\varepsilon
,
\end{equation*}%
and, so, there exists some $x_{\varepsilon }\in V$ such that\ 
\begin{equation*}
f(x_{\varepsilon })+f^{\ast }(x^{\ast })<\left\langle x_{\varepsilon
},x^{\ast }\right\rangle +\varepsilon .
\end{equation*}%
Hence, $x^{\ast }\in \partial _{\varepsilon }f(x_{\varepsilon })$ and the
Brondsted-Rockafellar theorem (see, e.g., \cite[Proposition 4.3.7]{CHL23})
gives some $z_{\varepsilon }\in x_{\varepsilon }+\sqrt{\varepsilon }B_{X}$\
and $z_{\varepsilon }^{\ast }\in x^{\ast }+\sqrt{\varepsilon }B_{X^{\ast }}$
such $z_{\varepsilon }^{\ast }\in \partial f(z_{\varepsilon }).$ Thus, $%
z_{\varepsilon }\in V+\sqrt{\varepsilon }B_{X}\subset U$ and 
\begin{equation*}
x^{\ast }\in z_{\varepsilon }^{\ast }+\sqrt{\varepsilon }B_{X^{\ast
}}\subset \partial f(z_{\varepsilon })+\sqrt{\varepsilon }B_{X^{\ast
}}\subset \tbigcup_{x\in U}\partial f(x)+\sqrt{\varepsilon }B_{X^{\ast }}.
\end{equation*}%
By the arbitrariness of $\varepsilon $ we deduce that $x^{\ast }\in \limfunc{%
cl}^{\left\Vert \cdot \right\Vert _{\ast }}\left( \cup _{x\in U}\partial
f(x)\right) $ and the arbitrariness of $U$ yields the inclusion
\textquotedblleft $\subset $\textquotedblright\ in (\ref{mm}).

Conversely, if $x^{\ast }$ is in\ the right hand side of (\ref{mm}),$\ $then
for each $U\in \mathcal{N}$ and $\varepsilon >0$ there exist some $x\in z+U$
and $z^{\ast }\in \partial f(x)$ such that $\left\Vert x^{\ast }-z^{\ast
}\right\Vert _{\ast }\leq \varepsilon $ and $f^{\ast }(x^{\ast })\leq
f^{\ast }(z^{\ast })+\varepsilon ,$ due to the $\left\Vert \cdot \right\Vert
_{\ast }$-lower semicontinuity of $f^{\ast }.\ $Hence, 
\begin{eqnarray*}
\inf_{y\in U}f(z+y)+f^{\ast }(x^{\ast }) &\leq &f(x)+f^{\ast }(x^{\ast }) \\
&\leq &f(x)+f^{\ast }(z^{\ast })+\varepsilon \\
&=&\left\langle x,z^{\ast }\right\rangle +\varepsilon \leq \left\langle
x,x^{\ast }\right\rangle +\left\Vert x\right\Vert \varepsilon +\varepsilon ,
\end{eqnarray*}%
and, by taking the supremum over $U$, 
\begin{equation*}
\bar{f}^{w^{\ast \ast }}(z)+(\bar{f}^{w^{\ast \ast }})^{\ast }(x^{\ast
})=\sup_{U}\inf_{y\in z+U}f(y)+f^{\ast }(x^{\ast })\leq \left\langle
x,x^{\ast }\right\rangle +\left\Vert x\right\Vert \varepsilon +\varepsilon .
\end{equation*}%
Thus, as $\varepsilon \downarrow 0$ we obtain that\ $x^{\ast }\in \partial
f^{w^{\ast \ast }}(z).$
\end{proof}

Conditions $(vi)$ in the following theorem\ completes the previous set of
conditions $(ii)$-$(iv)$ in Theorem \ref{prop1}.

\begin{theo}
\label{thmseq}Assume that $X$ is Banach, and let $f,$ $g\in \Gamma _{0}(X)$
such that $\limfunc{dom}f\cap \limfunc{dom}g\neq \emptyset .$ Then the
following statements are equivalent, provided that $\tau $ is the dual norm\
topology (see (\ref{tau})):

$(v)$ $\overline{f+g}^{w^{\ast \ast }}=\overline{f}^{w^{\ast \ast }}+%
\overline{g}^{w^{\ast \ast }}.$

$(vi)$ For all $x\in X$ and $x^{\ast }\in \partial (f+g)(x)$, there are
sequences $x_{n},$ $y_{n}\rightarrow x$\ and $x_{n}^{\ast }\in \partial
f(x_{n}),$ $y_{n}^{\ast }\in \partial g(y_{n})$ such that 
\begin{equation*}
\left\langle x_{n}^{\ast },x_{n}-x\right\rangle \rightarrow 0,\text{ }%
\left\langle y_{n}^{\ast },y_{n}-x\right\rangle \rightarrow 0,\text{ }%
f(x_{n})\rightarrow f(x),\text{ }g(y_{n})\rightarrow g(x),
\end{equation*}%
and 
\begin{equation*}
x_{n}^{\ast }+y_{n}^{\ast }\overset{\left\Vert \cdot \right\Vert _{\ast }}{%
\rightarrow }x^{\ast }.
\end{equation*}
\end{theo}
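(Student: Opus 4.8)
The plan is to prove the two implications $(v)\Rightarrow(vi)$ and $(vi)\Rightarrow(v)$ separately, exploiting the fact that, for convex functions, condition $(v)$ is nothing but condition $(i)$ of Theorem \ref{prop1}. Indeed, since $f$ and $g$ are convex, one has $\overline{\operatorname{co}}^{w^{\ast\ast}}f=\overline{f}^{w^{\ast\ast}}$ and $\overline{\operatorname{co}}^{w^{\ast\ast}}g=\overline{g}^{w^{\ast\ast}}$, so $(v)$ is exactly \eqref{equality}. Consequently, by Proposition \ref{lema1}, $(v)$ is equivalent to the conjugate sum rule $(f+g)^{\ast}=\overline{f^{\ast}\square g^{\ast}}^{\left\Vert \cdot\right\Vert _{\ast}}$, and by Theorem \ref{prop1} it is also equivalent to the validity, for every $x\in\operatorname{dom}f\cap\operatorname{dom}g$ and every $\varepsilon>0$, of the norm version of (\ref{summ1}) (recall $\varepsilon_{f+g}(x)=0$ by Lemma \ref{lema2}$(i)$). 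These two reformulations are the two faces I would use for the two implications.

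For $(v)\Rightarrow(vi)$, fix $x\in X$ and $x^{\ast}\in\partial(f+g)(x)$. Since $x^{\ast}\in\partial_{\varepsilon}(f+g)(x)$ for every $\varepsilon>0$, the norm version of (\ref{summ1}) supplies, taking $\varepsilon=1/n$, scalars $\varepsilon_{1}^{n}+\varepsilon_{2}^{n}=1/n$ and approximate subgradients $u_{n}^{\ast}\in\partial_{\varepsilon_{1}^{n}}f(x)$, $v_{n}^{\ast}\in\partial_{\varepsilon_{2}^{n}}g(x)$ with $\left\Vert u_{n}^{\ast}+v_{n}^{\ast}-x^{\ast}\right\Vert _{\ast}\to0$. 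I would then exactify each approximate subgradient by the Brondsted--Rockafellar theorem, exactly as in the proof of Lemma \ref{lembb}: this produces $x_{n}\to x$, $y_{n}\to x$ and $x_{n}^{\ast}\in\partial f(x_{n})$, $y_{n}^{\ast}\in\partial g(y_{n})$ with $\left\Vert x_{n}^{\ast}-u_{n}^{\ast}\right\Vert _{\ast}$ and $\left\Vert y_{n}^{\ast}-v_{n}^{\ast}\right\Vert _{\ast}$ controlled, whence $x_{n}^{\ast}+y_{n}^{\ast}\overset{\left\Vert \cdot\right\Vert _{\ast}}{\rightarrow}x^{\ast}$. The Ekeland inequality underlying Brondsted--Rockafellar yields $f(x_{n})-f(x)-\langle u_{n}^{\ast},x_{n}-x\rangle\in[-\varepsilon_{1}^{n},0]$ (and similarly for $g$), while $\langle x_{n}^{\ast},x_{n}-x\rangle=\langle u_{n}^{\ast},x_{n}-x\rangle+o(1)$; hence the two value statements $f(x_{n})\to f(x)$, $g(y_{n})\to g(x)$ and the two gap statements $\langle x_{n}^{\ast},x_{n}-x\rangle\to0$, $\langle y_{n}^{\ast},y_{n}-x\rangle\to0$ all follow once $\langle u_{n}^{\ast},x_{n}-x\rangle\to0$ is secured. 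Lower semicontinuity of $f$ and $g$ gives the lower estimate $\liminf\langle x_{n}^{\ast},x_{n}-x\rangle\geq0$ for free, and the matching upper estimate is the crux discussed below.

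For $(vi)\Rightarrow(v)$, I would verify the conjugate sum rule. Given $x^{\ast}\in\partial(f+g)(x)$ and the sequences furnished by $(vi)$, the exact Fenchel equalities $f(x_{n})+f^{\ast}(x_{n}^{\ast})=\langle x_{n}^{\ast},x_{n}\rangle$ and $g(y_{n})+g^{\ast}(y_{n}^{\ast})=\langle y_{n}^{\ast},y_{n}\rangle$, combined with the four convergences in $(vi)$ and with $x^{\ast}\in\partial(f+g)(x)$, give by a direct computation $f^{\ast}(x_{n}^{\ast})+g^{\ast}(y_{n}^{\ast})\to(f+g)^{\ast}(x^{\ast})$ while $x_{n}^{\ast}+y_{n}^{\ast}\to x^{\ast}$ in norm. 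Since $(f^{\ast}\square g^{\ast})(x_{n}^{\ast}+y_{n}^{\ast})\leq f^{\ast}(x_{n}^{\ast})+g^{\ast}(y_{n}^{\ast})$, this forces $\overline{f^{\ast}\square g^{\ast}}^{\left\Vert \cdot\right\Vert _{\ast}}(x^{\ast})\leq(f+g)^{\ast}(x^{\ast})$; as the reverse inequality always holds, equality is attained on the range of $\partial(f+g)$. I would then propagate this to all of $X^{\ast}$: since $\partial(f+g)$ has norm-dense range in $\operatorname{dom}(f+g)^{\ast}$ (Brondsted--Rockafellar), it suffices to approximate an arbitrary $\eta\in\operatorname{dom}(f+g)^{\ast}$ by range points $\eta_{k}\to\eta$ along which $(f+g)^{\ast}(\eta_{k})\to(f+g)^{\ast}(\eta)$, after which lower semicontinuity of $\overline{f^{\ast}\square g^{\ast}}^{\left\Vert \cdot\right\Vert _{\ast}}$ closes the argument and Proposition \ref{lema1} delivers $(v)$.

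The main obstacle is the gap condition $\langle x_{n}^{\ast},x_{n}-x\rangle\to0$ in $(v)\Rightarrow(vi)$. The subgradients $u_{n}^{\ast}$ returned by the approximate sum rule need not be norm bounded, so the naive bound $|\langle u_{n}^{\ast},x_{n}-x\rangle|\leq\left\Vert u_{n}^{\ast}\right\Vert _{\ast}\left\Vert x_{n}-x\right\Vert$ need not vanish; in fact one can exhibit $x_{n}\to x$ and $x_{n}^{\ast}\in\partial f(x_{n})$ with $f(x_{n})\to f(x)$ yet $\langle x_{n}^{\ast},x_{n}-x\rangle\not\to0$, so the gap condition must genuinely be produced by the construction rather than read off afterwards. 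I would resolve this by choosing the Brondsted--Rockafellar parameter $\lambda_{n}$ adaptively, balancing the requirement $\varepsilon_{i}^{n}/\lambda_{n}\to0$ (needed for $x_{n}^{\ast}+y_{n}^{\ast}\to x^{\ast}$) against $\left\Vert u_{n}^{\ast}\right\Vert _{\ast}\lambda_{n}\to0$ (needed for the gap), which becomes feasible after selecting, for each $n$, a sufficiently fine approximate decomposition; this adaptive exactification is precisely the ingredient that replaces the reflexivity used in \cite{T1,T2}. The very same norm control is what underlies the value-convergent approximation required to extend the conjugate identity from the range of $\partial(f+g)$ to all of $X^{\ast}$ in $(vi)\Rightarrow(v)$, so I expect both directions to hinge on this single technical point.
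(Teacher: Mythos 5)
Your direction $(v)\Rightarrow(vi)$ contains a genuine gap, located exactly at the point you yourself identify as the crux, and the adaptive repair you propose cannot work. With your normalization of Br\o ndsted--Rockafellar (point moved by at most $\lambda _{n}$, subgradient by at most $\varepsilon _{i}^{n}/\lambda _{n}$), you need simultaneously $\varepsilon _{i}^{n}/\lambda _{n}\rightarrow 0$ (for $x_{n}^{\ast }+y_{n}^{\ast }\rightarrow x^{\ast }$) and $\left\Vert u_{n}^{\ast }\right\Vert _{\ast }\lambda _{n}\rightarrow 0$ (for the gap and value conditions); multiplying these two requirements shows that feasibility forces $\left\Vert u_{n}^{\ast }\right\Vert _{\ast }\varepsilon _{i}^{n}\rightarrow 0$, i.e.\ the fuzzy rule must supply decompositions at level $\varepsilon ^{n}$ whose norms are $o(1/\varepsilon ^{n})$. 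Neither $(v)$ nor Theorem \ref{prop1} gives any norm control on the decompositions, and such control can genuinely fail: take $X=\mathbb{R}^{2}$, $f=\mathrm{I}_{A}$, $g=\mathrm{I}_{B}$ with $A=\{(s,t):t\geq s^{2}\}$, $B=-A$, $x=\theta $, $x^{\ast }=(1,0)\in \partial (f+g)(\theta )$. Every $u^{\ast }=(u_{1},u_{2})\in \partial _{\varepsilon _{1}}f(\theta )$ satisfies $u_{1}^{2}\leq 4\varepsilon _{1}\left\vert u_{2}\right\vert $ (and symmetrically for $g$), so every decomposition of $x^{\ast }$ within error $1/4$ has $\max (\left\Vert u^{\ast }\right\Vert ,\left\Vert v^{\ast }\right\Vert )\geq c/\varepsilon $ for some constant $c>0$, where $\varepsilon =\varepsilon _{1}+\varepsilon _{2}$; hence $\left\Vert u_{n}^{\ast }\right\Vert _{\ast }\varepsilon ^{n}\geq c$ and your two requirements are incompatible --- even though both $(v)$ and $(vi)$ hold in this example. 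The missing idea, which is what the paper uses instead of two separate exactifications, is a \emph{joint} application of a strong (epigraphical) form of Br\o ndsted--Rockafellar, \cite[Proposition 4.3.7]{CHL23}, to the single function $\varphi (y,z):=f(y)+g(z)$ on $X\times X$ at $(x,x)$, starting from $(u_{n}^{\ast },v_{n}^{\ast })\in \partial _{(\varepsilon _{n})^{2}}\varphi (x,x)$. This does two things your scheme cannot: first, its conclusions include direct, norm-free bounds on the joint duality gap $\left\vert \left\langle x_{n}^{\ast },x_{n}-x\right\rangle +\left\langle y_{n}^{\ast },y_{n}-x\right\rangle \right\vert $ and on the joint value deviation $\left\vert f(x_{n})+g(y_{n})-f(x)-g(x)\right\vert $, from which the four individual conditions of $(vi)$ follow via the subgradient inequalities and lower semicontinuity; second, the exact subgradients it returns have the multiplicative form $x_{n}^{\ast }=(1+\varepsilon _{n}\alpha _{n})u_{n}^{\ast }+\varepsilon _{n}b_{n}^{\ast }$, $y_{n}^{\ast }=(1+\varepsilon _{n}\alpha _{n})v_{n}^{\ast }+\varepsilon _{n}c_{n}^{\ast }$ with the \emph{same} scalar $\alpha _{n}$ and $b_{n}^{\ast },c_{n}^{\ast }\in B_{X^{\ast }}$, so that the error in the sum involves only $\alpha _{n}(u_{n}^{\ast }+v_{n}^{\ast })$, which is bounded because $u_{n}^{\ast }+v_{n}^{\ast }$ is close to $x^{\ast }$: the unboundedness of the individual approximate subgradients cancels in the sum. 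No tuning of $\lambda _{n}$ in two separate applications reproduces this cancellation.

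Your direction $(vi)\Rightarrow(v)$, by contrast, is essentially correct and follows a genuinely different route from the paper's: you establish $(f+g)^{\ast }=\overline{f^{\ast }\square g^{\ast }}^{\left\Vert \cdot \right\Vert _{\ast }}$ on the range of $\partial (f+g)$ via exact Fenchel equalities, extend it to $\limfunc{dom}(f+g)^{\ast }$ by Br\o ndsted--Rockafellar density with value convergence, and invoke Proposition \ref{lema1}, whereas the paper proves the inclusion $\partial (f+g)(x)\subset \partial (\overline{f}^{w^{\ast \ast }}+\overline{g}^{w^{\ast \ast }})(x)$, lifts it to $X^{\ast \ast }$ through Lemma \ref{lembb}, and concludes by Rockafellar's integration theorem (Proposition \ref{roctem}). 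Your route has the merit of bypassing Proposition \ref{roctem} entirely (the paper must argue separately that no circularity occurs between that result and Theorem \ref{thmseq}). Note, however, that your closing remark is mistaken on one point: the value-convergent density step in this direction does \emph{not} hinge on the problematic norm control above, because there the uncontrolled quantity (the norm of an almost-maximizer $z_{\delta }$ of $\left\langle \eta ,\cdot \right\rangle -(f+g)$) is known \emph{before} the Br\o ndsted--Rockafellar parameter is chosen, so one can arrange $\lambda _{\delta }\left\Vert z_{\delta }\right\Vert \rightarrow 0$. You should write this step out, but it is a routine order-of-quantifiers matter, not the obstruction that defeats your other direction.
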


\begin{dem}
$(vi)\implies (v):$ Given\ $x\in X$ and $x^{\ast }\in \partial (f+g)(x),$ we
let the sequences $(x_{n}),$ $(y_{n})\subset X$, $x_{n}^{\ast }\in \partial
f(x_{n})$ and $y_{n}^{\ast }\in \partial g(y_{n})$ be as\ in $(vi).$\ Then,
given any $\varepsilon >0,$ for all $n$ large enough\ we have 
\begin{equation*}
x^{\ast }\in x_{n}^{\ast }+y_{n}^{\ast }+\varepsilon B_{X^{\ast }},\text{ }%
\left\langle x_{n}^{\ast },x_{n}-x\right\rangle \leq \varepsilon /2,\text{ }%
f(x)-f(x_{n})\leq \varepsilon /2,\text{ }
\end{equation*}%
\begin{equation*}
\left\langle y_{n}^{\ast },y_{n}-y\right\rangle \leq \varepsilon /2,\text{
and }g(x)-g(y_{n})\leq \varepsilon /2.
\end{equation*}%
Hence, for any $y\in \limfunc{dom}f$,%
\begin{eqnarray*}
\left\langle x_{n}^{\ast },y-x\right\rangle &=&\left\langle x_{n}^{\ast
},y-x_{n}\right\rangle +\left\langle x_{n}^{\ast },x_{n}-x\right\rangle \\
&\leq &f(y)-f(x_{n})+\varepsilon /2\leq f(y)-f(x)+\varepsilon ,
\end{eqnarray*}%
and we deduce\ that $x_{n}^{\ast }\in \partial _{\varepsilon }f(x).$
Similarly, $y_{n}^{\ast }\in \partial _{\varepsilon }g(x)\ $and, so, 
\begin{equation*}
x^{\ast }\in \partial _{\varepsilon }f(x)+\partial _{\varepsilon
}g(x)+\varepsilon B_{X^{\ast }},
\end{equation*}%
entailing\ 
\begin{equation*}
x^{\ast }\in \partial _{\varepsilon }\overline{f}^{w^{\ast \ast
}}(x)+\partial _{\varepsilon }\overline{g}^{w^{\ast \ast }}(x)+\varepsilon
B_{X^{\ast }}\subset \partial _{2\varepsilon }\left( \overline{f}^{w^{\ast
\ast }}+\overline{g}^{w^{\ast \ast }}\right) (x)+\varepsilon B_{X^{\ast }}.
\end{equation*}%
Thus, for each\ $\delta >0$, 
\begin{equation*}
\partial (f+g)(x)\subset \partial _{\delta }\left( \overline{f}^{w^{\ast
\ast }}+\overline{g}^{w^{\ast \ast }}\right) (x)+\varepsilon B_{X^{\ast }},%
\text{ for all }\varepsilon \in (0,\delta /2),
\end{equation*}%
and we obtain\ 
\begin{eqnarray*}
\partial (f+g)(x) &\subset &\tbigcap_{0<\varepsilon <\delta /2}\left(
\partial _{\delta }\left( \overline{f}^{w^{\ast \ast }}+\overline{g}%
^{w^{\ast \ast }}\right) (x)+\varepsilon B_{X^{\ast }}\right) \\
&=&\limfunc{cl}\nolimits^{\left\Vert \cdot \right\Vert _{\ast }}\left(
\partial _{\delta }\left( \overline{f}^{w^{\ast \ast }}+\overline{g}%
^{w^{\ast \ast }}\right) (x)\right) =\partial _{\delta }\left( \overline{f}%
^{w^{\ast \ast }}+\overline{g}^{w^{\ast \ast }}\right) (x),
\end{eqnarray*}%
implying 
\begin{equation*}
\partial (f+g)(x)\subset \tbigcap_{\delta >0}\partial _{\delta }\left( 
\overline{f}^{w^{\ast \ast }}+\overline{g}^{w^{\ast \ast }}\right)
(x)=\partial \left( \overline{f}^{w^{\ast \ast }}+\overline{g}^{w^{\ast \ast
}}\right) (x).
\end{equation*}%
Now, applying\ Lemma \ref{lembb} successively to $f+g$ and $\overline{f}%
^{w^{\ast \ast }}+\overline{g}^{w^{\ast \ast }}$, we conclude that, for all $%
z\in X^{\ast \ast },$ 
\begin{eqnarray*}
\partial (\overline{f+g}^{w^{\ast \ast }})(z) &=&\tbigcap_{U\in \mathcal{N}}~%
\limfunc{cl}\nolimits^{\left\Vert \cdot \right\Vert _{\ast }}\left(
\tbigcup_{x\in z+U}\partial (f+g)(x)\right) \\
&\subset &\tbigcap_{U\in \mathcal{N}}~\limfunc{cl}\nolimits^{\left\Vert
\cdot \right\Vert _{\ast }}\left( \tbigcup_{x\in z+U}\partial \left( 
\overline{f}^{w^{\ast \ast }}+\overline{g}^{w^{\ast \ast }}\right) (x)\right)
\\
&=&\partial \left( \overline{f}^{w^{\ast \ast }}+\overline{g}^{w^{\ast \ast
}}\right) (z).
\end{eqnarray*}%
Consequently, by applying Rockafellar's theorem (see Proposition \ref{roctem}
below) in the Banach space $X^{\ast \ast },$ the proper $w^{\ast \ast }$-lsc
(hence, $\left\Vert \cdot \right\Vert _{\ast \ast }$-lsc) convex functions $%
\overline{f+g}^{w^{\ast \ast }}$ and\ $\overline{f}^{w^{\ast \ast }}+%
\overline{g}^{w^{\ast \ast }}$ coincide up to some additive constant.$\ $%
Thus, (\ref{equalityco}) follows as these two functions coincide with $f+g$
on $X.$

$(v)\implies (vi):$ Fix $x\in X,$\ $x^{\ast }\in \partial (f+g)(x),$ and
choose any sequence $\varepsilon _{n}\downarrow 0.$ Then, taking into
account Theorem \ref{prop1}$(iv)$ (applied with $\tau =\left\Vert \cdot
\right\Vert _{\ast }$), for each\ fixed $n$ assertion $(v)$ implies that 
\begin{equation*}
x^{\ast }\in \partial _{\frac{(\varepsilon _{n})^{2}}{4}}(f+g)(x)\subset
\partial _{\frac{(\varepsilon _{n})^{2}}{2}}f(x)+\partial _{\frac{%
(\varepsilon _{n})^{2}}{2}}g(x)+(\varepsilon _{n})^{2}B_{X^{\ast }};
\end{equation*}%
that is, there are $u_{n}^{\ast }\in \partial _{\frac{(\varepsilon _{n})^{2}%
}{2}}f(x)$ and $v_{n}^{\ast }\in \partial _{\frac{(\varepsilon _{n})^{2}}{2}%
}g(x)$ such that 
\begin{equation}
\left\Vert x^{\ast }-u_{n}^{\ast }-v_{n}^{\ast }\right\Vert \leq
(\varepsilon _{n})^{2}.  \label{by}
\end{equation}%
We introduce\ the function $\varphi \in \Gamma _{0}(X\times X)$ defined as $%
\varphi (y,z):=f(y)+g(z).$ Since it can be easily check that\ $(u_{n}^{\ast
},v_{n}^{\ast })\in \partial _{(\varepsilon _{n})^{2}}\varphi (x,x),$\ by
applying the Brondsted-Rockafellar\ theorem (see, e.g., \cite[Proposition
4.3.7]{CHL23}) in the Banach product space $X\times X$ (endowed with max\
norm), we find\ $x_{n},$ $y_{n}\in B_{\varepsilon _{n}}(x),$ $\alpha _{n}\in
\lbrack -1,1],$ $y_{n}^{\ast },$ $z_{n}^{\ast }\in B_{X^{\ast }},$ $%
x_{n}^{\ast }\in \partial f(x_{n})$ and $y_{n}^{\ast }\in \partial g(y_{n})$
such that 
\begin{equation}
\left\vert \left\langle x_{n}^{\ast },x_{n}-x\right\rangle +\left\langle
y_{n}^{\ast },y_{n}-x\right\rangle \right\vert \leq \varepsilon
_{n}^{2}+\varepsilon _{n},  \label{v1}
\end{equation}%
\begin{equation}
\left\vert \varphi (x_{n},y_{n})-\varphi (x,x)\right\vert =\left\vert
f(x_{n})-f(x)+g(y_{n})-g(x)\right\vert \leq \varepsilon _{n}^{2}+\varepsilon
_{n},  \label{v2}
\end{equation}%
\begin{equation*}
x_{n}^{\ast }=u_{n}^{\ast }+\varepsilon _{n}(y_{n}^{\ast }+\alpha
_{n}u_{n}^{\ast })\text{ and }y_{n}^{\ast }=v_{n}^{\ast }+\varepsilon
_{n}(z_{n}^{\ast }+\alpha _{n}v_{n}^{\ast }).
\end{equation*}%
Thus, by (\ref{by}) and the triangle inequality, 
\begin{eqnarray*}
\left\Vert x^{\ast }-x_{n}^{\ast }-y_{n}^{\ast }\right\Vert &=&\left\Vert
x^{\ast }-u_{n}^{\ast }-v_{n}^{\ast }-\varepsilon _{n}(y_{n}^{\ast
}+z_{n}^{\ast }+\alpha _{n}(u_{n}^{\ast }+v_{n}^{\ast }))\right\Vert \\
&\leq &(\varepsilon _{n})^{2}+\varepsilon _{n}(\left\Vert y_{n}^{\ast
}+z_{n}^{\ast }\right\Vert +\left\vert \alpha _{n}\right\vert \left\Vert
u_{n}^{\ast }+v_{n}^{\ast }\right\Vert ) \\
&\leq &(\varepsilon _{n})^{2}+\varepsilon _{n}(2+\left\vert \alpha
_{n}\right\vert (\varepsilon _{n})^{2}+\left\vert \alpha _{n}\right\vert
\left\Vert x^{\ast }\right\Vert ) \\
&\leq &(\varepsilon _{n})^{2}+\varepsilon _{n}(2+(\varepsilon
_{n})^{2}+\left\Vert x^{\ast }\right\Vert ),
\end{eqnarray*}%
and we deduce\ that $x_{n}^{\ast }+y_{n}^{\ast }\overset{\left\Vert \cdot
\right\Vert _{\ast }}{\rightarrow }x^{\ast }.$ Moreover, thanks to (\ref{v2}%
), the lower semicontinuity of $f$ and $g$ implies that $f(x_{n})\rightarrow
f(x)$ and $g(y_{n})\rightarrow g(x).$ At the same time, since\ 
\begin{equation*}
\left\langle x_{n}^{\ast },x_{n}-x\right\rangle \leq f(x_{n})-f(x),
\end{equation*}%
and 
\begin{equation*}
\left\langle x_{n}^{\ast },x_{n}-x\right\rangle \geq \left( \left\langle
x_{n}^{\ast },x_{n}-x\right\rangle +\left\langle y_{n}^{\ast
},y_{n}-x\right\rangle \right) +g(x)-g(y_{n}),
\end{equation*}%
the inequality in (\ref{v1}) yields $\left\langle x_{n}^{\ast
},x_{n}-x\right\rangle \rightarrow 0,$ as well as\ $\left\langle y_{n}^{\ast
},y_{n}-x\right\rangle \rightarrow 0.$ So, the sequences $(x_{n}),$ $(y_{n})$%
, $(x_{n}^{\ast })$ and $(y_{n}^{\ast })$ satisfy the properties required in
assertion $(vi)$.
\end{dem}

When $X$ is a reflexive Banach space, equality (\ref{equalityco}) is always
satisfied when the involved functions are in $\Gamma _{0}(X).$ So, the
calculus rule in $(vi)$ is always valid in this setting, and we recover the
results established in \cite{T1} and \cite{T2}; see, also, \cite{CHJ16} for
related results.

Next, we show that usual classical qualifications ensure the validity of (%
\ref{equalityco}) (in the convex case). The following\ corollary follows\
from Theorem \ref{prop1}.

\begin{cor}
\label{corqualif}Let $f,$ $g:X\rightarrow \mathbb{R}_{\infty }$ be convex\
functions such that $\limfunc{dom}f\cap \limfunc{dom}g\neq \emptyset ,$ each
one having\ a continuous\ affine minorant. Then (\ref{equalityco}) holds
provided that $f$ is continuous at some point in $\limfunc{dom}g$ (or $X$
and the functions $f$ and $g$ satisfy any condition in \cite[Theorem 2.8.7]%
{Za02}).
\end{cor}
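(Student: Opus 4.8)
The plan is to deduce the corollary from Theorem \ref{prop1} by verifying the conjugate reformulation \ref{no} of condition $(i)$. First I would record that, since $f$ and $g$ are convex and $f+g$ is convex as well, the $w^{\ast\ast}$-closed hull of each of these functions coincides with its $w^{\ast\ast}$-closed convex hull: extending a convex function on $X$ by $+\infty$ on $X^{\ast\ast}\setminus X$ keeps it convex (its epigraph is the convex set $\limfunc{epi}f\subset X\times\mathbb{R}$), and the $w^{\ast\ast}$-lsc hull of a convex function is again convex, hence equal to the largest lsc convex minorant. Consequently $\overline{f}^{w^{\ast\ast}}=\overline{\limfunc{co}}^{w^{\ast\ast}}f$, $\overline{g}^{w^{\ast\ast}}=\overline{\limfunc{co}}^{w^{\ast\ast}}g$ and $\overline{f+g}^{w^{\ast\ast}}=\overline{\limfunc{co}}^{w^{\ast\ast}}(f+g)$, so that the target equality \ref{equalityco} is nothing but condition $(i)$ of Theorem \ref{prop1}, namely \ref{equality}.

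Since $f$ and $g$ satisfy the hypotheses of Lemma \ref{lema1}, Proposition \ref{lema1} reduces \ref{equality} to the conjugate sum rule \ref{no}, i.e. $(f+g)^{\ast}=\overline{f^{\ast}\square g^{\ast}}^{\tau}$; thus it suffices to establish this identity. Here I would invoke the classical convex calculus: under the continuity of $f$ at some point of $\limfunc{dom}g$---or under any of the interiority/qualification conditions listed in \cite[Theorem 2.8.7]{Za02}---one has the \emph{exact} conjugate sum rule $(f+g)^{\ast}=f^{\ast}\square g^{\ast}$ as a pointwise identity on $X^{\ast}$ (the infimal convolution being moreover attained). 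This is the Moreau--Rockafellar type theorem, and it is the step that consumes the qualification hypothesis.

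It then remains to check that the $\tau$-closure in \ref{no} is redundant. The point is that $(f+g)^{\ast}$, being a conjugate function, is $\sigma(X^{\ast},X)$-lsc (a supremum of $w^{\ast}$-continuous affine functions); since $\sigma(X^{\ast},X)\subset\tau$ by \ref{tau}, every $\sigma(X^{\ast},X)$-lsc function is \emph{a fortiori} $\tau$-lsc. Hence $f^{\ast}\square g^{\ast}=(f+g)^{\ast}$ is already $\tau$-lsc, so its $\tau$-lsc hull coincides with itself: $\overline{f^{\ast}\square g^{\ast}}^{\tau}=f^{\ast}\square g^{\ast}=(f+g)^{\ast}$. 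This is exactly \ref{no}; by Proposition \ref{lema1} we obtain \ref{equality}, and by the first paragraph this is \ref{equalityco}.

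The only genuinely delicate point is this last topological reduction: one must make sure that passing from the general weak$^{\ast}$ formula \ref{clinf} (valid with the $\sigma(X^{\ast},X)$-closure) to the $\tau$-closure demanded in \ref{no} does not reintroduce a gap. The qualification hypothesis removes any gap at the source, by forcing $f^{\ast}\square g^{\ast}$ to equal the already $\tau$-lsc function $(f+g)^{\ast}$, after which the choice of compatible topology $\tau\in[\sigma(X^{\ast},X),\beta(X^{\ast},X)]$ is immaterial. I would also double-check at the outset that the convex extensions to $X^{\ast\ast}$ behave as claimed, so that \ref{equalityco} and \ref{equality} really are the same statement in the convex setting.
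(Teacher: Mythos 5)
Your proof is correct, but it routes through a different piece of the paper's machinery than the authors do. The paper's own proof is essentially one line: under the continuity (or \cite[Theorem 2.8.7]{Za02}-type) qualification one has the \emph{exact} $\varepsilon$-subdifferential sum rule $\partial _{\varepsilon }(f+g)(x)=\tbigcup_{\varepsilon _{1}+\varepsilon _{2}=\varepsilon }\partial _{\varepsilon _{1}}f(x)+\partial _{\varepsilon _{2}}g(x)$ for all $\varepsilon \geq 0$, which immediately yields conditions $(ii)$--$(iv)$ of Theorem \ref{prop1} (the $\tau $-closures there cost nothing because $\partial _{\varepsilon }(f+g)(x)$ is $\sigma (X^{\ast },X)$-closed, hence $\tau $-closed), and therefore condition $(i)$. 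You instead verify the conjugate identity (\ref{no}) directly, via the exact Moreau--Rockafellar formula $(f+g)^{\ast }=f^{\ast }\square g^{\ast }$ together with the observation that $(f+g)^{\ast }$, being a conjugate, is already $\tau $-lsc so the $\tau $-closure in (\ref{no}) is redundant, and then you invoke Proposition \ref{lema1}. Both arguments consume the qualification hypothesis through the same classical source (\cite[Theorem 2.8.7]{Za02} delivers both the conjugate and the $\varepsilon $-subdifferential conclusions), but yours bypasses the full set of equivalences in Theorem \ref{prop1} and only needs the comparatively elementary Proposition \ref{lema1}, while the paper's version is shorter on the page because Theorem \ref{prop1} has already been established. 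Your preliminary check that (\ref{equalityco}) and (\ref{equality}) coincide for convex $f$ and $g$ (the closed hull of a convex function equals its closed convex hull) is a point the paper leaves implicit, and it is worth making explicit as you do.
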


\begin{dem}
Under the current\ conditions, for every $x\in \limfunc{dom}f\cap \limfunc{%
dom}g$ and $\varepsilon \geq 0$ we have the following\ (exact) calculus rule
(see, e.g., \cite[Theorem 2.8.7]{CHL23} and \cite[Theorem 2.8.7]{Za02}): 
\begin{equation*}
\partial _{\varepsilon }(f+g)(x)=\tbigcup_{\substack{ \varepsilon
_{1}+\varepsilon _{2}=\varepsilon  \\ \varepsilon _{1},\varepsilon _{2}\geq
0 }}\partial _{\varepsilon _{1}}f(x)+\partial _{\varepsilon _{2}}g(x).
\end{equation*}%
So, all the conditions of Theorem \ref{prop1} are satisfied.
\end{dem}

\begin{rem}
Equality (\ref{equalityco}) also fulfills when the functions $f$ and $g$ are
lsc and convex,\ and the epigraph of $f^{\ast }\square g^{\ast }$ is $\sigma
(X^{\ast },X)$-closed (\cite{JeBu05}).
\end{rem}

Remember that for any pair\ of nonempty convex subsets $A$ and $B$ of $X$, a
classical result states that 
\begin{equation*}
\overline{A\cap B}=\overline{A}\cap \overline{B},
\end{equation*}%
provided that $A\cap (\limfunc{int}B)\neq \emptyset .$ The following
corollary shows that\ the relation above also holds for the $w^{\ast \ast }$%
-closure in $X^{\ast \ast }$, under the same interiority condition\
verified\ in $X$.

\begin{cor}
Let $A,$ $B$ be\ nonempty convex sets such that $A\cap (\limfunc{int}B)\neq
\emptyset .$ Then 
\begin{equation*}
\overline{A\cap B}^{w^{\ast \ast }}=\overline{A}^{w^{\ast \ast }}\cap 
\overline{B}^{w^{\ast \ast }}.
\end{equation*}
\end{cor}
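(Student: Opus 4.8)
The plan is to deduce this set identity from the function-level result of Corollary \ref{corqualif} by specializing to indicator functions. Concretely, I would set $f:=\mathrm{I}_{B}$ and $g:=\mathrm{I}_{A}$, regarded as convex functions on $X$, and note that $f+g=\mathrm{I}_{A\cap B}$. The conclusion of Corollary \ref{corqualif}, namely $\overline{f+g}^{w^{\ast\ast}}=\overline{f}^{w^{\ast\ast}}+\overline{g}^{w^{\ast\ast}}$, then transcribes directly into the desired equality, once each closed hull of an indicator is rewritten as the indicator of a closure.

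The transcription rests on two elementary facts. The first is that, for any nonempty convex $C\subset X$, one has $\overline{\mathrm{I}_{C}}^{w^{\ast\ast}}=\mathrm{I}_{\overline{C}^{w^{\ast\ast}}}$; this is obtained by computing epigraphs, since the epigraph of the extension of $\mathrm{I}_{C}$ is $C\times[0,+\infty)$ inside $X^{\ast\ast}\times\mathbb{R}$, whose $w^{\ast\ast}$-closure is $\overline{C}^{w^{\ast\ast}}\times[0,+\infty)=\limfunc{epi}\mathrm{I}_{\overline{C}^{w^{\ast\ast}}}$. The second is that a sum of two indicators is the indicator of the intersection. Combining these, $\overline{f+g}^{w^{\ast\ast}}=\mathrm{I}_{\overline{A\cap B}^{w^{\ast\ast}}}$ on the one hand, and $\overline{f}^{w^{\ast\ast}}+\overline{g}^{w^{\ast\ast}}=\mathrm{I}_{\overline{B}^{w^{\ast\ast}}}+\mathrm{I}_{\overline{A}^{w^{\ast\ast}}}=\mathrm{I}_{\overline{A}^{w^{\ast\ast}}\cap\overline{B}^{w^{\ast\ast}}}$ on the other; equating these two indicator functions gives precisely $\overline{A\cap B}^{w^{\ast\ast}}=\overline{A}^{w^{\ast\ast}}\cap\overline{B}^{w^{\ast\ast}}$.

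It then remains to verify that Corollary \ref{corqualif} applies to $f=\mathrm{I}_{B}$ and $g=\mathrm{I}_{A}$. Convexity of $f,g$ follows from convexity of $A,B$; the zero function is a continuous affine minorant of each indicator; and $\limfunc{dom}f\cap\limfunc{dom}g=B\cap A\supset A\cap(\limfunc{int}B)\neq\emptyset$. The decisive hypothesis is that $f$ be continuous at some point of $\limfunc{dom}g$: since $\mathrm{I}_{B}$ is continuous exactly at the points of $\limfunc{int}B$ (where it vanishes on a whole neighborhood), and $\limfunc{dom}g=A$, this amounts to $A\cap(\limfunc{int}B)\neq\emptyset$, which is exactly the assumed interiority condition.

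The only genuine point requiring care — and the main obstacle — is the assignment of roles. Because the hypothesis is stated as $A\cap(\limfunc{int}B)\neq\emptyset$, the set carrying the interior, namely $B$, must be taken as the function required to be continuous at a point of the other's domain; this forces the choice $f=\mathrm{I}_{B}$, $g=\mathrm{I}_{A}$. With the opposite choice, the continuity requirement of Corollary \ref{corqualif} would instead demand $\limfunc{int}A\cap B\neq\emptyset$, which is not assumed. Apart from getting this order right, the argument is a routine translation of the function identity into its set form via the epigraph computation above.
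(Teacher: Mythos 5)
Your proposal is correct and follows essentially the same route as the paper: both deduce the set identity from Corollary \ref{corqualif} applied to the indicator functions $\mathrm{I}_{A}$ and $\mathrm{I}_{B}$, using that a sum of indicators is the indicator of the intersection and that the closed hull of an indicator is the indicator of the closure. Your additional care in assigning the roles ($f=\mathrm{I}_{B}$ so that the continuity hypothesis becomes exactly $A\cap(\operatorname{int}B)\neq\emptyset$) is a detail the paper leaves implicit, but it is the right reading.
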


\begin{dem}
According to Corollary \ref{corqualif}, we have that 
\begin{equation*}
\mathrm{I}_{\overline{A\cap B}^{w^{\ast \ast }}}=\overline{\mathrm{I}_{A\cap
B}}^{w^{\ast \ast }}=\overline{\mathrm{I}_{A}+\mathrm{I}_{B}}^{w^{\ast \ast
}}=\mathrm{I}_{\overline{A}^{w^{\ast \ast }}}+\mathrm{I}_{\overline{B}%
^{w^{\ast \ast }}}=\mathrm{I}_{\overline{A}^{w^{\ast \ast }}\cap \overline{B}%
^{w^{\ast \ast }}}.
\end{equation*}
\end{dem}

We give a functional counterpart of the previous observation, showing that
the usual interiority conditions applied to the original functions also give
rise\ to exact subdifferential calculus rules for the regularized functions.

\begin{theo}
\label{qualif}Let $f,$ $g:X\rightarrow \mathbb{R}_{\infty }$ be convex
functions with continuous\ affine minorants. If $f$ is continuous somewhere
in $\limfunc{dom}g$, then, for all $x\in X^{\ast \ast }$ and $\varepsilon
\geq 0,$\ 
\begin{eqnarray}
\partial _{\varepsilon }\left( \overline{f+g}^{w^{\ast \ast }}\right) (x)
&=&\partial _{\varepsilon }\left( \bar{f}^{w^{\ast \ast }}+\bar{g}^{w^{\ast
\ast }}\right) (x)  \notag \\
&=&\bigcup\limits_{\substack{ \varepsilon _{1},\varepsilon _{2}\geq 0  \\ %
\varepsilon _{1}+\varepsilon _{2}=\varepsilon }}\left( \partial
_{\varepsilon _{1}}\bar{f}^{w^{\ast \ast }}(x)+\partial _{\varepsilon _{2}}%
\bar{g}^{w^{\ast \ast }}(x)\right) .  \label{dd}
\end{eqnarray}
\end{theo}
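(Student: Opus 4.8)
The plan is to separate (\ref{dd}) into its two equalities and to reduce the second one, the genuine $\varepsilon$-subdifferential sum rule on $X^{\ast\ast}$, to the exact infimal convolution available already on $X$. Since $f$ is continuous at a point of $\limfunc{dom}g$, Corollary \ref{corqualif} yields (\ref{equalityco}), that is
\[
\phi:=\overline{f+g}^{w^{\ast\ast}}=\bar{f}^{w^{\ast\ast}}+\bar{g}^{w^{\ast\ast}},
\]
which is precisely the first equality in (\ref{dd}). I record what I will need about this common function: $\bar f^{w^{\ast\ast}}$ and $\bar g^{w^{\ast\ast}}$ are proper (the continuous affine minorant assumption guarantees properness of the $w^{\ast\ast}$-closed convex hulls), $\phi$ is proper convex and $w^{\ast\ast}$-lsc, and the conjugate identities $(\bar f^{w^{\ast\ast}})^{\ast}=f^{\ast}$, $(\bar g^{w^{\ast\ast}})^{\ast}=g^{\ast}$ and $\phi^{\ast}=(\overline{f+g}^{w^{\ast\ast}})^{\ast}=(f+g)^{\ast}$ hold.

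The step I expect to be the main obstacle is resisting the natural but doomed attempt to run an exact sum rule directly on $X^{\ast\ast}$ by promoting the continuity of $f$ at $x_0$ to continuity of $\bar f^{w^{\ast\ast}}$ at $x_0$ in a topology compatible with the pairing $(X^{\ast\ast},X^{\ast})$. This cannot work in general: such continuity would force the polar $V^{\circ}$ of an $X$-neighbourhood $V$ of $x_0$ to be $\sigma(X^{\ast},X^{\ast\ast})$-compact, which already fails when $X$ is a non-reflexive Banach space and $\tau$ is the dual norm (there $V^{\circ}$ is the dual ball, which is weakly compact only in the reflexive case). Instead I would lift everything from $X$: the continuity hypothesis gives, by the classical Moreau--Rockafellar theorem (e.g. \cite[Theorem 2.8.7]{Za02}), the \emph{exact} formula $(f+g)^{\ast}=f^{\ast}\square g^{\ast}$ with attained infimum, i.e. for each $x^{\ast}\in\limfunc{dom}(f+g)^{\ast}$ there are $x_1^{\ast},x_2^{\ast}\in X^{\ast}$ with $x_1^{\ast}+x_2^{\ast}=x^{\ast}$ and $(f+g)^{\ast}(x^{\ast})=f^{\ast}(x_1^{\ast})+g^{\ast}(x_2^{\ast})$. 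This is a decomposition statement about functionals in $X^{\ast}$, and is thus indifferent to the point of $X^{\ast\ast}$ at which $\phi$ is later evaluated.

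With this in hand the second equality becomes routine. For the inclusion ``$\subseteq$'', fix $x^{\ast\ast}\in\limfunc{dom}\phi$ (otherwise both sides are empty, using properness of the two summands) and $x^{\ast}\in\partial_{\varepsilon}\phi(x^{\ast\ast})$; then $\phi^{\ast}(x^{\ast})=(f+g)^{\ast}(x^{\ast})$ is finite, so I take the attained decomposition $x^{\ast}=x_1^{\ast}+x_2^{\ast}$ above. Inserting $\phi^{\ast}(x^{\ast})=f^{\ast}(x_1^{\ast})+g^{\ast}(x_2^{\ast})$ and $\phi=\bar f^{w^{\ast\ast}}+\bar g^{w^{\ast\ast}}$ into the defining inequality $\phi(x^{\ast\ast})+\phi^{\ast}(x^{\ast})\le\langle x^{\ast\ast},x^{\ast}\rangle+\varepsilon$ and regrouping gives
\[
\bigl(\bar f^{w^{\ast\ast}}(x^{\ast\ast})+f^{\ast}(x_1^{\ast})-\langle x^{\ast\ast},x_1^{\ast}\rangle\bigr)+\bigl(\bar g^{w^{\ast\ast}}(x^{\ast\ast})+g^{\ast}(x_2^{\ast})-\langle x^{\ast\ast},x_2^{\ast}\rangle\bigr)\le\varepsilon.
\]
Each parenthesis is nonnegative by Fenchel--Young (here is where $(\bar f^{w^{\ast\ast}})^{\ast}=f^{\ast}$ and $(\bar g^{w^{\ast\ast}})^{\ast}=g^{\ast}$ enter), so naming them $\varepsilon_1,\varepsilon_2$ yields $x_1^{\ast}\in\partial_{\varepsilon_1}\bar f^{w^{\ast\ast}}(x^{\ast\ast})$, $x_2^{\ast}\in\partial_{\varepsilon_2}\bar g^{w^{\ast\ast}}(x^{\ast\ast})$ with $\varepsilon_1+\varepsilon_2\le\varepsilon$; enlarging $\varepsilon_1$ to absorb the slack (which only enlarges $\partial_{\varepsilon_1}\bar f^{w^{\ast\ast}}$) places $x^{\ast}$ in the union on the right of (\ref{dd}). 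The reverse inclusion ``$\supseteq$'' is elementary and needs no qualification: adding the two defining inequalities of $x_i^{\ast}\in\partial_{\varepsilon_i}$ and using the always-valid bound $\phi^{\ast}(x_1^{\ast}+x_2^{\ast})=(f+g)^{\ast}(x_1^{\ast}+x_2^{\ast})\le f^{\ast}(x_1^{\ast})+g^{\ast}(x_2^{\ast})$ shows $x_1^{\ast}+x_2^{\ast}\in\partial_{\varepsilon}\phi(x^{\ast\ast})$ whenever $\varepsilon_1+\varepsilon_2=\varepsilon$. Combining the two inclusions establishes (\ref{dd}).
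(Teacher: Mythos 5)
Your proof is correct, but it takes a genuinely different route from the paper's. The paper also gets the first equality in (\ref{dd}) from Corollary \ref{corqualif}, but for the second equality it applies Theorem \ref{prop1} to $\bar{f}^{w^{\ast \ast }}$ and $\bar{g}^{w^{\ast \ast }}$ to obtain the closed-union formula (\ref{sum1b}) at points of $X^{\ast \ast }$, and then removes the closure by a net argument: a net $x_{V}^{\ast }=y_{V}^{\ast }+z_{V}^{\ast }$ is extracted from the closures, the continuity of $f$ at $x_{0}\in \limfunc{dom}g$ is used to show that $(y_{V}^{\ast })$ is equicontinuous (bounded on a neighborhood), Alaoglu--Bourbaki yields a $\sigma (X^{\ast },X)$-convergent subnet, and the $w^{\ast \ast }$-density of $X$ in $X^{\ast \ast }$ transfers the limiting subgradient inequalities to all of $X^{\ast \ast }$. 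You instead localize the entire use of the qualification hypothesis in the classical exact Moreau--Rockafellar formula on $X$, namely $(f+g)^{\ast }=f^{\ast }\square g^{\ast }$ with attained infimum, and then combine it with the conjugate identities $(\bar{f}^{w^{\ast \ast }})^{\ast }=f^{\ast }$, $(\bar{g}^{w^{\ast \ast }})^{\ast }=g^{\ast }$, $(\overline{f+g}^{w^{\ast \ast }})^{\ast }=(f+g)^{\ast }$ and a Fenchel--Young bookkeeping argument; as you rightly observe, the decomposition lives in $X^{\ast }$ and is insensitive to whether the evaluation point lies in $X$ or in $X^{\ast \ast }\setminus X$. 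Your argument is shorter and avoids nets and compactness altogether; since the paper's own Corollary \ref{corqualif} already invokes \cite[Theorem 2.8.7]{Za02}, importing the exact conjugate formula from the same source costs nothing extra. What the paper's longer route buys is self-containment within its own machinery: it exhibits Theorem \ref{qualif} as the statement that the $\tau $-closure in (\ref{sum1b}) can be dropped under the continuity qualification, which is the thematic point of Section \ref{enh}. The minor finiteness checks your argument needs (properness of $\bar{f}^{w^{\ast \ast }}$, $\bar{g}^{w^{\ast \ast }}$ and $\overline{f+g}^{w^{\ast \ast }}$, so that the regrouped Fenchel--Young terms are well defined and both sides of (\ref{dd}) are empty off $\limfunc{dom}(\bar{f}^{w^{\ast \ast }}+\bar{g}^{w^{\ast \ast }})$) all follow from the continuous affine minorant hypothesis, so there is no gap.
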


\begin{proof}
The first equality is an immediate consequence of Corollary \ref{corqualif}.
To prove the second one we observe first that $\overline{f}^{w^{\ast \ast
}}, $ $\overline{g}^{w^{\ast \ast }}\in \Gamma _{0}(X^{\ast \ast }),$ due to
existence of continuous affine minorants for\ $f$ and $g$ in $X.$ Then,
thanks to Theorem \ref{prop1}, the functions $\overline{f}^{w^{\ast \ast }}$
and $\overline{g}^{w^{\ast \ast }}$ satisfy (\ref{sum1b});\ i.e., for all $%
x\in X^{\ast \ast }$ and $\varepsilon \geq 0,$%
\begin{equation}
\partial _{\varepsilon }\left( \overline{f}^{w^{\ast \ast }}+\overline{g}%
^{w^{\ast \ast }}\right) (x)=\tbigcap_{\alpha >\varepsilon }\limfunc{cl}%
\nolimits^{\tau }\left( A_{\alpha }\right) ,  \label{mas}
\end{equation}%
where 
\begin{equation*}
A_{\alpha }:=\bigcup\limits_{\substack{ \varepsilon _{1},\varepsilon
_{2}\geq 0  \\ \varepsilon _{1}+\varepsilon _{2}=\alpha }}\left( \partial
_{\varepsilon _{1}}\overline{f}^{w^{\ast \ast }}(x)+\partial _{\varepsilon
_{2}}\overline{g}^{w^{\ast \ast }}(x)\right) .
\end{equation*}%
Take $x^{\ast }\in \partial _{\varepsilon }\left( \overline{f}^{w^{\ast \ast
}}+\overline{g}^{w^{\ast \ast }}\right) (x)\ $and consider any net $(\alpha
_{V})_{V\in \mathcal{N}}\subset ]0,1[$ such that $\alpha _{V}\downarrow
\varepsilon $ (here, $\mathcal{N}:=\mathcal{N}_{X^{\ast }}(\tau )$ is
partially-ordered by descending inclusions). Hence, for all $V\in \mathcal{N}%
,$ (\ref{mas}) implies that $x^{\ast }\in \limfunc{cl}\nolimits^{\tau
}\left( A_{\alpha _{V}}\right) $ and we can find $x_{V}^{\ast }\in A_{\alpha
_{V}}$ such that $x_{V}^{\ast }\in x^{\ast }+V.$ Given any\ $W\in \mathcal{N}%
,$ by taking\ $V_{0}=W$ we see that 
\begin{equation*}
x_{V}^{\ast }\in x^{\ast }+V\subset x^{\ast }+W,\text{ for all }V\subset
V_{0};
\end{equation*}%
that is, $(x_{V}^{\ast })$ is $\tau $-convergent to $x^{\ast }.$ Moreover,
for each $V\in \mathcal{N}$ there are $\varepsilon _{V,1},$ $\varepsilon
_{V,2}\geq 0,$ $y_{V}^{\ast }\in \partial _{\varepsilon _{V,1}}\overline{f}%
^{w^{\ast \ast }}(x)$ and $z_{V}^{\ast }\in \partial _{\varepsilon _{V,2}}%
\overline{g}^{w^{\ast \ast }}(x)$ such that $\varepsilon _{V,1}+\varepsilon
_{V,2}=\alpha _{V}$ and $x_{V}^{\ast }=y_{V}^{\ast }+z_{V}^{\ast }.$ By the
current continuity hypothesis, we choose $x_{0}\in \limfunc{dom}g$ and $%
\theta $-neighborhood $U_{0}\subset X$ such that 
\begin{equation*}
\overline{f}^{w^{\ast \ast }}(x_{0}+u)\leq f(x_{0}+u)\leq f(x_{0})+1,\text{
for all }u\in U_{0}.
\end{equation*}%
Then, for all $u\in U_{0}$ ($\subset X$), 
\begin{eqnarray}
\left\langle y_{V}^{\ast },x_{0}-x+u\right\rangle &\leq &\overline{f}%
^{w^{\ast \ast }}(x_{0}+u)-\overline{f}^{w^{\ast \ast }}(x)+\varepsilon
_{V,1}  \notag \\
&\leq &f(x_{0})-\overline{f}^{w^{\ast \ast }}(x)+2.  \label{hamd}
\end{eqnarray}%
At the same time, there exists some $M\geq 0$ such that 
\begin{equation*}
\left\langle y_{V}^{\ast },x_{0}-x\right\rangle \geq -M,\text{ for all }V;
\end{equation*}%
otherwise, since $\left\langle x_{V}^{\ast },x_{0}-x\right\rangle
=\left\langle y_{V}^{\ast }+z_{V}^{\ast },x_{0}-x\right\rangle $ converges
to $\left\langle x^{\ast },x_{0}-x\right\rangle $ and $z_{V}^{\ast }\in
\partial _{\varepsilon _{V,2}}\overline{g}^{w^{\ast \ast }}(x),$ we get a
contradiction with 
\begin{equation*}
\left\langle z_{V}^{\ast },x_{0}-x\right\rangle \leq \overline{g}^{w^{\ast
\ast }}(x_{0})-\overline{g}^{w^{\ast \ast }}(x)+1,\text{ for all }V.
\end{equation*}%
Therefore (\ref{hamd}) reads\ 
\begin{equation*}
\left\langle y_{V}^{\ast },u\right\rangle \leq f(x_{0})-\overline{f}%
^{w^{\ast \ast }}(x)+M+2,\text{ for all }u\in U_{0},
\end{equation*}%
and,\ thanks to the Alaoglu-Bourbaki theorem, we may assume that $%
(y_{V}^{\ast })_{V}$ is $\sigma (X^{\ast },X)$-convergent to some $y^{\ast
}\in X^{\ast };$ hence, $(z_{V}^{\ast })_{V}$ is $\sigma (X^{\ast },X)$%
-convergent to $x^{\ast }-y^{\ast }.$ We can also assume that $\varepsilon
_{V,1}\downarrow \varepsilon _{1}$ and $\varepsilon _{V,2}\downarrow
\varepsilon _{2}$ with $\varepsilon _{1}+\varepsilon _{2}=\varepsilon .$
More precisely, since for all $y\in X$ and\ $V\in \mathcal{N}$ we have 
\begin{equation*}
\left\langle y_{V}^{\ast },y-x\right\rangle \leq \overline{f}^{w^{\ast \ast
}}(y)-\overline{f}^{w^{\ast \ast }}(x)+\varepsilon _{V,1}\leq f(y)-\overline{%
f}^{w^{\ast \ast }}(x)+\varepsilon _{V,1},
\end{equation*}%
by taking the limit on $V$ we infer that 
\begin{equation*}
\left\langle y^{\ast },y-x\right\rangle \leq f(y)-\overline{f}^{w^{\ast \ast
}}(x)+\varepsilon _{1}.
\end{equation*}%
Thus, knowing that $X$ is $\sigma (X^{\ast \ast },X^{\ast })$-dense in $%
X^{\ast \ast },$ we deduce that 
\begin{equation*}
\left\langle y^{\ast },y-x\right\rangle \leq \overline{f}^{w^{\ast \ast
}}(y)-\overline{f}^{w^{\ast \ast }}(x)+\varepsilon _{1},\text{ for all }y\in
X^{\ast \ast };
\end{equation*}%
that is, $y^{\ast }\in \partial _{\varepsilon _{1}}\overline{f}^{w^{\ast
\ast }}(x).$ Similarly, we show that $x^{\ast }-y^{\ast }\in \partial
_{\varepsilon _{2}}\overline{g}^{w^{\ast \ast }}(x)$, and we conclude that 
\begin{equation*}
x^{\ast }=y^{\ast }+(x^{\ast }-y^{\ast })\in \partial _{\varepsilon _{1}}%
\overline{f}^{w^{\ast \ast }}(x)+\partial _{\varepsilon _{2}}\overline{g}%
^{w^{\ast \ast }}(x).
\end{equation*}%
Thus, we are done because the inclusion \textquotedblleft $\supset $%
\textquotedblright\ in (\ref{dd}) is straightforward.
\end{proof}

\section{Illustration\label{Appl}}

To illustrate our previous results, we give the following proposition which
offers\ an alternative proof of the well-known integration result of the
Fenchel subdifferential of convex functions. Our\ proof relies on Corollary %
\ref{qualif} and Lemma \ref{lembb}, which were both established
independently of the present integration\ result. It is worth noting that
while this result was used in the proof of Theorem \ref{thmseq}, the latter
does not enter into the following proof.

\begin{prop}
\label{roctem} Let $f,$ $g\in \Gamma _{0}(X)$ be such that 
\begin{equation*}
\partial f(x)\subset \partial g(x),\text{ for all }x\in X.
\end{equation*}%
Then $f$ and $g$ coincide up to some additive constant, provided that one of
the following conditions holds:

$(i)$ $f$ is continuous somewhere in $X.$

$(ii)$ $X$ is Banach.
\end{prop}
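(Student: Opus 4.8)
The plan is to show that $g-f$ is constant by proving the two inequalities $g\le f+c$ and $g\ge f+c$ with $c:=g(x_{0})-f(x_{0})$, where $x_{0}$ is any point with $\partial f(x_{0})\neq \emptyset $ (such a point exists under either hypothesis). Since $f,g\in \Gamma _{0}(X)$, this amounts to upgrading the assumed inclusion $\partial f\subset \partial g$ to the equality $\partial f=\partial g$ and then reconstructing each function from its subgradients. One of the two inequalities is soft: telescoping the subgradient inequality $\langle x_{i}^{\ast },x_{i+1}-x_{i}\rangle \le f(x_{i+1})-f(x_{i})$ along chains $(x_{i},x_{i}^{\ast })$ lying in the graph of $\partial f\subset \partial g$, and passing to the supremum, yields $g(x)-g(x_{0})\ge f(x)-f(x_{0})$, once one knows that this supremum reconstructs $f(x)-f(x_{0})$ (Rockafellar's recovery formula). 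The step I expect to be the main obstacle is the reverse inclusion $\partial g\subset \partial f$: it is exactly the assertion that $\partial f$ is maximal among monotone operators, so it cannot follow from anything weaker than a maximal--monotonicity argument, and this is where the two sufficient conditions do their work.

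Under $(i)$ I would first exploit continuity on the open convex set $\Omega :=\limfunc{int}\limfunc{dom}f$, which is nonempty; note that $\partial f(x_{0})\neq \emptyset $ and $\partial f\subset \partial g$ force $x_{0}\in \limfunc{dom}g$, so Theorem \ref{qualif} applies to the pair $(f,g)$. On a segment contained in $\Omega $ the restrictions of $f$ and $g$ are finite convex functions of one real variable, with $f$ locally Lipschitz; the chain rule identifies their one--sided derivatives with $\min /\max $ of $\langle \cdot ,d\rangle $ over $\partial f$ and over $\partial g$, and the inclusion $\partial f\subset \partial g$ squeezes the one--sided derivatives of $g$ between those of $f$, forcing equality almost everywhere. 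Integrating along the segment (using absolute continuity of finite one--dimensional convex functions) gives $g=f+c$ on the connected set $\Omega $, hence $\partial f=\partial g$ there. The delicate point is then to propagate the equality to the boundary of $\limfunc{dom}f$ and to the rest of $X$, where $\partial f$ picks up normal directions; here I would pass to conjugates, use that $\partial f\subset \partial g$ transposes to $\partial f^{\ast }\subset \partial g^{\ast }$, and invoke the \emph{exact} $\varepsilon $--subdifferential sum rule of Theorem \ref{qualif} for $\overline{f}^{w^{\ast \ast }}+\overline{g}^{w^{\ast \ast }}$, which removes the closure present in the general formula (\ref{sum1b}) and legitimizes the extension.

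Under $(ii)$, with $X$ Banach, Lemma \ref{lembb} plays the role of continuity. Applying formula (\ref{mm}) to $f$ and to $g$ and using $\partial f(x)\subset \partial g(x)$ for every $x$ gives, for all $z\in X^{\ast \ast }$,
\begin{equation*}
\partial \overline{f}^{w^{\ast \ast }}(z)=\tbigcap_{U\in \mathcal{N}}\limfunc{cl}\nolimits^{\left\Vert \cdot \right\Vert _{\ast }}\left( \tbigcup_{x\in z+U}\partial f(x)\right) \subset \tbigcap_{U\in \mathcal{N}}\limfunc{cl}\nolimits^{\left\Vert \cdot \right\Vert _{\ast }}\left( \tbigcup_{x\in z+U}\partial g(x)\right) =\partial \overline{g}^{w^{\ast \ast }}(z),
\end{equation*}
so the inclusion lifts to $\partial \overline{f}^{w^{\ast \ast }}\subset \partial \overline{g}^{w^{\ast \ast }}$ on the bidual. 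The value of Lemma \ref{lembb} is that its proof already encodes the Brondsted--Rockafellar theorem: it describes $\partial \overline{f}^{w^{\ast \ast }}(z)$ through norm--limits of genuine subgradients of $f$ at nearby points, and this approximation is precisely what prevents $\partial f$ from being properly enlarged inside a monotone operator. Using it I would prove the reverse inclusion $\partial g\subset \partial f$: given a pair $(x,x^{\ast })$ with $x^{\ast }\in \partial g(x)$, it is monotonically related to the whole graph of $\partial f$, and a Brondsted--Rockafellar/variational argument produces subgradients of $f$ at points converging to $x$ with values converging to $x^{\ast }$, forcing $x^{\ast }\in \partial f(x)$.

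In either case, once $\partial f=\partial g$ is secured, the forward estimate and its now--available reverse combine to $g(x)-g(x_{0})=f(x)-f(x_{0})$ for all $x$, which is the claim. I expect essentially all the difficulty to be concentrated in the maximal--monotonicity step; the one--dimensional computation in case $(i)$ and the lifting identity (\ref{mm}) in case $(ii)$ are the concrete devices that deliver it, while the recovery of a convex function from its subgradients (also underwritten by these tools) converts the subdifferential equality into the desired equality of functions up to an additive constant.
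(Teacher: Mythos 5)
Your route --- upgrade $\partial f\subset \partial g$ to equality via maximal monotonicity of $\partial f$, then reconstruct both functions by Rockafellar's chain-supremum formula --- is the classical one and is genuinely different from the paper's, but it has a real gap exactly where you yourself locate the difficulty, and the device you propose to fill it does not do the job. In case $(ii)$, the reverse inclusion $\partial g(x)\subset \partial f(x)$ \emph{is} Rockafellar's maximality theorem, and Lemma \ref{lembb} cannot produce it: at a point $z=x\in X$ both sides of (\ref{mm}) collapse to $\partial f(x)$ itself (since $\overline{f}^{w^{\ast \ast }}=f$ on $X$ and $(\overline{f}^{w^{\ast \ast }})^{\ast }=f^{\ast }$), so the lemma provides no mechanism for manufacturing pairs $(y,y^{\ast })$ in the graph of $\partial f$ with $y$ near $x$ and $y^{\ast }$ norm-close to a prescribed $x^{\ast }\in \partial g(x)$. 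Your lifted inclusion $\partial \overline{f}^{w^{\ast \ast }}\subset \partial \overline{g}^{w^{\ast \ast }}$ on $X^{\ast \ast }$ is correctly derived, but you never use it: the maximality claim is back at points of $X$, where it says nothing new. Nor can Br{\o}ndsted--Rockafellar be launched as you suggest, because it needs the premise $x^{\ast }\in \partial _{\varepsilon }f(x)$, i.e. $f(x)+f^{\ast }(x^{\ast })\leq \left\langle x^{\ast },x\right\rangle +\varepsilon $, and the only way to extract this from $x^{\ast }\in \partial g(x)$ is to already know an estimate of the form $f\leq g+c$ --- the conclusion. So either this step is a hole, or you import the maximality theorem \cite{Ro70} and the recovery formula as black boxes, two results at least as deep as the proposition itself, which defeats the paper's announced purpose of giving an \emph{alternative} proof of the integration theorem from Theorem \ref{qualif} and Lemma \ref{lembb} alone.

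Case $(i)$ has an analogous soft spot. The one-dimensional squeeze on $\limfunc{int}(\limfunc{dom}f)$ is fine, but it only yields $g=f+c$ on $\limfunc{dom}f$ (via radial limits), and an lsc convex function is not determined by these values: $\mathrm{I}_{[0,1]}$ and $\mathrm{I}_{[0,2]}$ agree on $(0,1)$, so the real content is proving $g=+\infty $ outside $\limfunc{dom}f$, which must exploit the subdifferential inclusion at or near boundary points (normal-cone directions); none of that appears. Your proposed fix --- ``$\partial f\subset \partial g$ transposes to $\partial f^{\ast }\subset \partial g^{\ast }$'' --- is unjustified in this generality: graph inversion only identifies $(\partial f)^{-1}(x^{\ast })$ with $\partial f^{\ast }(x^{\ast })\cap X$, whereas $\partial f^{\ast }$ is $X^{\ast \ast }$-valued, and the part of $\partial f^{\ast }(x^{\ast })$ lying outside $X$ is precisely what is uncontrolled. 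For comparison, the paper's proof of $(i)$ reduces to dimension one along lines through the continuity point and invokes a one-dimensional integration lemma, while its proof of $(ii)$ never establishes $\partial f=\partial g$ at all: it truncates by $\mathrm{I}_{rB_{X}}$, lifts the inclusion to the bidual via Theorem \ref{qualif} and Lemma \ref{lembb} (used at genuine bidual points, where it has content), inverts the graphs by Moreau's theorem to get $\partial f_{r}\subset \partial g_{r}$ for the everywhere-finite, hence continuous, functions $f_{r}=f^{\ast }\square (r\left\Vert \cdot \right\Vert _{\ast })$ on $X^{\ast }$, applies $(i)$ there, and conjugates back; the ball truncation is the essential trick your sketch is missing, since $f^{\ast }$ itself need not be continuous anywhere.
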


\begin{proof}
$(i)$ Assume that $f$ is continuous at $x_{0}\in \limfunc{dom}f.$ Given $%
x\in X$, we introduce the functions $\varphi ,$ $\psi \in \Gamma _{0}(%
\mathbb{R})$ defined as 
\begin{equation*}
\varphi (s):=f(x_{0}+s(x-x_{0})),\text{ }\psi (s):=g(x_{0}+s(x-x_{0}),\text{ 
}s\in \mathbb{R}.
\end{equation*}%
Since\ $0\in \limfunc{int}(\limfunc{dom}\psi ),$ \cite[Lemma 4.4.2]{CHL23}
entails 
\begin{equation*}
\varphi (s)-\psi (s)=f(x_{0})-g(x_{0})=:c_{0},\text{ for all }s\in \mathbb{R}%
,
\end{equation*}%
which in turn implies $f(x)=g(x)+c_{0}.$

$(ii)$ Assume that $X$ is Banach and fix $x_{0}\in \limfunc{dom}f\cap 
\limfunc{dom}g$ together with $r>\left\Vert x_{0}\right\Vert .$ Given\ $z\in
X^{\ast \ast },$\ by Theorem\ \ref{qualif}\ (applied twice) and Lemma \ref%
{lembb}, we have that 
\begin{eqnarray*}
\partial (\bar{f}^{w^{\ast \ast }}+\mathrm{I}_{rB_{X^{\ast \ast }}})(z)
&=&\partial (\overline{f+\mathrm{I}_{rB_{X}}}^{w^{\ast \ast }})(z) \\
&=&\tbigcap_{U\in \mathcal{N}}\limfunc{cl}\nolimits^{\left\Vert \cdot
\right\Vert _{\ast }}\left( \tbigcup_{x\in z+U}\partial (f+\mathrm{I}%
_{rB_{X}})(x)\right) \\
&=&\tbigcap_{U\in \mathcal{N}}\limfunc{cl}\nolimits^{\left\Vert \cdot
\right\Vert _{\ast }}\left( \tbigcup_{x\in z+U}\left( \partial f(x)+\mathrm{N%
}_{rB_{X}}(x)\right) \right) ,
\end{eqnarray*}%
where $\mathcal{N}:=\mathcal{N}_{X^{\ast \ast }}(w^{\ast \ast }).$ Hence,
using the current assumption, 
\begin{equation*}
\partial (\bar{f}^{w^{\ast \ast }}+\mathrm{I}_{rB_{X^{\ast \ast
}}})(z)\subset \tbigcap_{U\in \mathcal{N}}\limfunc{cl}\nolimits^{\left\Vert
\cdot \right\Vert _{\ast }}\left( \tbigcup_{x\in z+U}\left( \partial g(x)+%
\mathrm{N}_{rB_{X}}(x)\right) \right) ,
\end{equation*}%
and, proceeding as above, we conclude that 
\begin{equation*}
\partial (\bar{f}^{w^{\ast \ast }}+\mathrm{I}_{rB_{X^{\ast \ast
}}})(z)\subset \partial (\bar{g}^{w^{\ast \ast }}+\mathrm{I}_{rB_{X^{\ast
\ast }}})(z),\text{ for all }z\in X^{\ast \ast }.
\end{equation*}%
Moreover, using Moreau's theorem, we have that 
\begin{equation*}
(\bar{f}^{w^{\ast \ast }}+\mathrm{I}_{rB_{X^{\ast \ast }}})^{\ast }=(%
\overline{f+\mathrm{I}_{rB_{X}}}^{w^{\ast \ast }})^{\ast }=(f+\mathrm{I}%
_{rB_{X}})^{\ast }=f^{\ast }\square (r\left\Vert \cdot \right\Vert _{\ast
})=:f_{r},
\end{equation*}%
and similarly $(\bar{g}^{w^{\ast \ast }}+\mathrm{I}_{rB_{X^{\ast \ast
}}})^{\ast }=g^{\ast }\square (r\left\Vert \cdot \right\Vert _{\ast
})=:g_{r};$ hence, for all $x^{\ast }\in X^{\ast },$%
\begin{equation*}
\partial f_{r}(x^{\ast })=(\partial (\bar{f}^{w^{\ast \ast }}+\mathrm{I}%
_{B_{X^{\ast \ast }}}))^{-1}(x^{\ast })\subset (\partial (\bar{g}^{w^{\ast
\ast }}+\mathrm{I}_{B_{X^{\ast \ast }}}))^{-1}(x^{\ast })=\partial
g_{r}(x^{\ast }).
\end{equation*}%
Therefore, since $f_{r}$ is (norm-)continuous, by $(i)$ there exists some $%
c_{r}\in \mathbb{R}$ such that 
\begin{equation*}
f_{r}=g_{r}-c_{r}.
\end{equation*}%
So, taking the conjugate with respect to the pair $(X,X^{\ast })$ and using
Moreau's theorem, we obtain 
\begin{equation*}
f+\mathrm{I}_{rB_{X}}=g+\mathrm{I}_{rB_{X}}+c_{r}.
\end{equation*}%
In particular, $c_{r}=f(x_{0})-g(x_{0})=:c_{0}$ and we deduce that $f+%
\mathrm{I}_{rB_{X}}=g+\mathrm{I}_{rB_{X}}+c_{0},$ for all sufficiently large 
$r;$ that is, $f=g+c_{0}.$
\end{proof}

\begin{rem}
It is worth noting that statement $(i)$ in Proposition \ref{roctem} can also
be obtained from $(ii)$. Indeed, given any finite-dimensional linear
subspace $L\subset X$ that intersects the set of continuity points of $f,$
we obtain, for all $x\in X,$\ 
\begin{equation*}
\partial (f+\mathrm{I}_{L})(x)=\partial f(x)+\partial \mathrm{I}%
_{L}(x)\subset \partial g(x)+\partial \mathrm{I}_{L}(x)\subset \partial (g+%
\mathrm{I}_{L})(x).
\end{equation*}%
So, it suffices to apply statement $(ii)$ in Proposition \ref{roctem} to the
restriction of the function $f+\mathrm{I}_{L}$ to $L$ (which is obviously
Banach).\medskip
\end{rem}


\begin{thebibliography}{99}
\bibitem{BW08} Bot RI, Wanka G (2008)\ The conjugate of the pointwise
maximum of two convex functions revisited. J Glob Optim 41:625--632.

\bibitem{BG08} Bot RI, Grad SM (2008) Regularity conditions for formulae of
biconjugate functions. Taiwan J Math 12:1921--1942.

\bibitem{CGH12} Correa R, Garc\'{\i}a Y, Hantoute A (2012) Integration
formulas via the Fenchel subdifferential of nonconvex functions. Nonlinear
Anal 75:1188--1201.

\bibitem{CGH18} Correa R, Garc\'{\i}a Y, Hantoute A (2018) Nonconvex
integration using $\varepsilon $-subdifferentials. Optimization
12:2205--2227.

\bibitem{CHL16} Correa R, Hantoute A, L\'{o}pez MA (2016) Weaker conditions
for subdifferential calculus of convex functions. J Funct Anal
271:1177--1212.

\bibitem{CHL23} Correa R, Hantoute A, L\'{o}pez MA (2023) Fundamentals of
Convex Analysis and Optimization: A Supremum Function Approach. Springer
Series in Operations Research and Financial Engineering. Springer Cham.

\bibitem{CHP17} Correa R, Hantoute A, Perez-Aros P (2017)\ On Br\o %
ndsted--Rockafellar's theorem for convex lower semicontinuous epi-pointed
functions in locally convex spaces. Math Prog 168:631--643.

\bibitem{CHS16} Correa R, Hantoute A, Salas D (2016) Integration of
nonconvex epi-pointed functions in locally convex spaces. J Convex Anal
23:511--530.

\bibitem{FS00} Fitzpatrick SP, Simons S (2000)\ On the pointwise maximum of
convex functions. Proc Am Math Soc 128:3553--3561.

\bibitem{JeBu05} Burachik RS,\ Jeyakumar V (2005) A dual condition for the
convex subdifferential sum formula with applications. J Convex Anal
2:279--290.

\bibitem{CHJ16} Correa R, Hantoute A, Jourani A (2016) Characterizations of
convex approximate subdifferential calculus in Banach spaces. Trans Amer
Math Soc 368:4831--4854.

\bibitem{EkTe76} Ekeland I, Temam R (1976) Convex Analysis and Variational
Problems. North-Holland.

\bibitem{HLZ08} Hantoute A, L\'{o}pez MA, Z\u{a}linescu C (2008)
Subdifferential calculus rules in convex analysis: a unifying approach via
pointwise supremum functions. SIAM J Optim 19:863--882.

\bibitem{HMSV95} Hiriart-Urruty J-B, Moussaoui M, Seeger A, Volle M (1995)
Subdifferential calculus without qualification conditions, using approximate
subdifferentials: a survey. Nonlinear Anal 24:1727--1754.

\bibitem{LoVo14} L\'{o}pez MA, Volle M (2014) The use of the relative
interior in integration with nonconvex data. Set-Valued Var Anal 22:247--258.

\bibitem{Ro70} Rockafellar RT (1970) On the maximal monotonicity of
subdifferential mappings. Pacific J Math 33:209--216.

\bibitem{T1} Thibault L (1997)\ Sequential convex subdifferential calculus
and sequential Lagrange multipliers. Siam J Control Optim 35:1434--1444.

\bibitem{T2} Thibault L (2000) Limiting convex subdifferential calculus with
applications to integration and maximal monotonicity of subdifferential, in:
Constructive, Experimental, and Nonlinear Analysis, M. Th\'{e}ra (ed.), CMS
Conference Proceedings 27, American Mathematical Society, Providence
279--289.

\bibitem{Vo08} Volle M (2008) Complements on subdifferential calculus.
Pacific J Optim 4:621--628.

\bibitem{Za08} Z\u{a}linescu C (2008) On the second conjugate of several
convex functions in general normed vector spaces. J Global Optim 40:475--487.

\bibitem{Za02} Z\u{a}linescu C (2002) Convex Analysis in General Vector
Spaces. River Edge (NJ): World Scientific Publishing Co. Inc.
\end{thebibliography}
\end{document}